\newtheorem{theorem}{Theorem}[section]
\newtheorem{lemma}[theorem]{Lemma}
\newtheorem{proposition}[theorem]{Proposition}
\newtheorem{corollary}[theorem]{Corollary}
\theoremstyle{definition}
\theoremstyle{remark}
\newtheorem{remark}[theorem]{Remark}
\numberwithin{equation}{section}
\DeclareMathOperator{\grad}{grad}
\DeclareMathOperator{\dist}{\mathrm{dist}}
\DeclareMathOperator{\Hess}{Hess}
\DeclareMathOperator{\dv}{div}
\begin{document}

\title[Three ball theorem]{On the three ball theorem for solutions\\ of the Helmholtz equation}

\author[S.~M.~Berge]{Stine Marie Berge}
\address{Department of Mathematical Sciences, Norwegian University of Science and Technology, 7491 Trondheim, Norway}
\email{stine.m.berge@ntnu.no}

\author[E.~Malinnikova]{Eugenia Malinnikova}
\address{Department of Mathematical Sciences, Norwegian University of Science and Technology, 7491 Trondheim, Norway, and}

\address{Department of Mathematics, Stanford University, Stanford, California 94305, United States}	
\email{eugeniam@stanford.edu}

\subjclass[2010]{47A75, 35H20, 53C17}

\begin{abstract}
	Let $u_k$ be a solution of the Helmholtz equation with the wave number $k$,  $\Delta u_k+k^2 u_k=0$, on a small ball in either $\mathbb{R}^n$, $\mathbb{S}^n$, or $\mathbb{H}^n$. For a fixed point $p$, we define $M_{u_k}(r)=\max_{d(x,p)\le r}|u_k(x)|.$ The following three ball inequality \[M_{u_k}(2r)\le C(k,r,\alpha)M_{u_k}(r)^{\alpha}M_{u_k}(4r)^{1-\alpha}\] is well known, it  holds for some $\alpha\in (0,1)$ and $C(k,r,\alpha)>0$ independent of $u_k$. We show that the constant $C(k,r,\alpha)$ grows exponentially in $k$ (when $r$ is fixed and small). We also compare our result with the increased stability for solutions of the Cauchy problem for the Helmholtz equation on Riemannian manifolds.
\end{abstract}

\maketitle
\section{Introduction} 
In the present work we study constants in the three ball inequality for solutions of the Helmholtz equation. We begin by recalling Hadamard's celebrated three circle theorem. Let $f$ be a holomorphic function in the disk $\mathbb{D}_R=\{z\in\mathbb{C}: |z|<R\}$. Then its maximum function \[M_f(r)=\max_{\left|z\right|\le r}|f(z)|\] satisfies the convexity condition 
\begin{equation}
   \label{E:three_circle}
M_f(r_0^{\alpha}r_1^{1-\alpha})\le M_f(r_0)^\alpha M_f(r_1)^{1-\alpha},
\end{equation}
for any $r_0,r_1<R$ and $\alpha\in \left( 0,1 \right)$. The proof of \eqref{E:three_circle} is based on the fact that $\log|f|$ is a subharmonic function. Note that by the maximum principle \eqref{E:three_circle} also holds when the maximum is taken over circles.

Surprisingly, Hadamard's theorem generalizes to other classes of functions, such as solutions of second order elliptic equations and their gradients. We refer the reader to the article \cite{La63} of Landis and to the survey \cite{ARRV09}. Three spheres theorems for the gradients of harmonic functions and, more generally, harmonic differential forms can be found in \cite{Ma00}. The three ball theorem for solutions of the Helmholtz equation on Riemannian manifolds was studied in \cite{Ma13}. This has various applications, for example it was one of the tools used to estimate the Hausdorff measure of the nodal sets of Laplace eigenfunctions, see \cite{Lo18b, Lo18a}.

We consider the Helmholtz equation
\begin{equation}
	\label{hh}
	\Delta_M u_k+k^{2} u_k=0
\end{equation}
on a domain $D$ in a Riemannian manifold $\left( M,\mathbf{g} \right)$. For $D=M$ and $M$ being a closed manifold without boundary, solutions of \eqref{hh} are $L^2$-eigenfunctions of the Laplacian. 
One of the important facts for analysis on closed manifolds is the existence of an orthonormal basis for $L^{2}\left( M \right)$ consisting of eigenfunctions of the Laplacian. The classical example is the Fourier basis on the circle $\mathbb{S}^{1}$. Such an orthonormal basis can be used to solve the heat, wave, and Schr\"odinger equations on closed manifolds, under certain conditions. 

We study properties of functions that satisfy the Helmholtz equation on some geodesic ball in the manifold. Fix a point $p\in M$ and denote by $B(p,r)$ the geodesic ball of radius $r$ centered at $p$. Then for a function $u$ we define 
\[M_{u}\left( r \right)=\max_{x\in B(p,r)}|u(x)|.\]
The following doubling inequality holds for Laplace eigenfunctions on a closed manifold
\begin{equation}
	\label{df}
	M_{u_k}(2r)\le C_1e^{C_2k}M_{u_k}(r),
\end{equation}
where $C_1$ and $C_2$ are constants only depending on the Riemannian manifold $(M,\mathbf{g})$.
Inequality \eqref{df} was first shown by Donnelly and Fefferman in \cite{DF88}. Later Mangoubi \cite[Theorem 3.2]{Ma13} gave a new proof by showing the stronger local inequality
\begin{equation}\label{eq:6}
	M_{u_k}\left( 3r \right)\le C_3e^{C_4kr}M_{u_k}\left( 2r \right)^{\alpha}M_{u_k}\left( 8r \right)^{1-\alpha},
\end{equation}
for small $r$, some fixed $\alpha\in (0,1)$, and constants $C_3$ and $C_4$ only depending on the curvature. Further results on the propagation of smallness for eigenfunctions were obtained in \cite{LM}.
In this article we show that \eqref{eq:6} is sharp in the following sense: The coefficient $C_3e^{C_4kr}$ in \eqref{eq:6} cannot be replaced by a function growing subexponentially in $kr$ as $k$ grows. This is done by constructing special families of solutions of the Helmholtz equation on  Euclidean spaces, hyperbolic spaces, and the standard  spheres.

We also compare \eqref{eq:6} with the increased stability for solutions of the Cauchy problem for the Helmholtz equation studied in \cite{HI04, IK11, BNO19}. Roughly speaking, the idea is that one can estimate the solution in the interior of some convex domain from an a priori bound and an estimate of the Cauchy data on some part of the boundary. Moreover, the estimate does not depend on $k$. For solutions of the Helmholtz equation in a geodesic ball $B\left( p,R \right)$ we prove for $r<R_1<R$ that 
		\begin{equation}\label{eq:revN}
		\int_{B(p, r)}u_k^2\,\mathrm{dvol}\le C(r,R_1)\int_{B(p, R_1)\setminus B(p, r)} u_k^2\,\mathrm{dvol},
		\end{equation}
                and call \eqref{eq:revN} the \textit{reverse three ball inequality}. A more general result can be found in \cite[Section 1.3]{ALM16}, where delicate questions regarding localization of solutions of the Schr\"odinger equation are considered. We deduce \eqref{eq:revN} from a similar estimate for the $H^1$ norms where the constant does not depend on $k$. The $H^1$ estimate is proved by a Carleman-type inequality, that can be found in \cite{I17, BNO19}.

The structure of the paper is as follows.
We prove the sharpness of the three ball inequality \eqref{eq:6} in Section~\ref{sec:2}. In Section~\ref{sec:2.1} we present the argument for the Euclidean space, while the arguments for the hyperbolic space and the sphere are given in Section~\ref{sec:2.2}. We prove inequality \eqref{eq:revN} in Section~\ref{sec:3}.  Finally, we give a simple estimate for the location of the first positive zero of the Bessel functions, and collect some comparison theorems for solutions of the Sturm--Liouville equations in Appendix.

\subsection*{Acknowledgements} The authors are very grateful to the anonymous referee for useful comments. Their suggestions, in particular, substantially improved the presentation in Section ~\ref{sec:3}.

%%%%%%%%%%%%%%%%%%%%%%%%%%%%%%%%%%%%

\section{The three ball inequality}\label{sec:2}
\subsection{Bessel functions and the Helmholtz equation  in $\mathbb{R}^n$}\label{sec:2.1} 
Let $J_l$ denote  the Bessel function of the first kind. We have collected some facts about the Bessel functions in Appendix \ref{A}. If $Y_m$ is an eigenfunction of the Laplace operator on the sphere $\mathbb{S}^{n-1}$ with eigenvalue $m(m+n-2)$ then
	\[u_k(r,\theta)=r^{1-n/2}J_{m+n/2-1}(kr)Y_m(\theta)\]
solves the Helmholtz equation \eqref{hh}. Moreover, any solution of \eqref{hh} in $\mathbb{R}^n$ (or in the unit ball)  can be decomposed into a series of such solutions.

In order to study the constant in the three ball inequality \eqref{eq:6} that involves the maximum function, we analyze the behavior of the Bessel functions. From now on we assume that $n=2$ for simplicity. Our results can be easily extended to all dimensions $n\ge 2$. 
\begin{lemma}\label{cor:2}
	Let $0<\gamma<\delta<1$ and set $\beta=\sqrt{1-\delta^2}$. Then there exists a constant 
	$C$, only depending on $\gamma$ and $\delta$, such that for any positive number $m$ we have
	\begin{equation}\label{eq:Jgr}
		J_m(\gamma m)<C\left(\frac{\gamma}{\delta}\right)^{\beta m} J_m(\delta m).
	\end{equation}
\end{lemma}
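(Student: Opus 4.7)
The plan is to reduce the inequality \eqref{eq:Jgr} to the classical Debye uniform asymptotic expansion for Bessel functions in the tunnelling regime $0<x<1$:
$$J_m(xm)=\frac{e^{-mF(x)}}{\sqrt{2\pi m}\,(1-x^2)^{1/4}}\bigl(1+O(1/m)\bigr),\qquad F(x):=\log\frac{1+\sqrt{1-x^2}}{x}-\sqrt{1-x^2},$$
with an error term that is uniform for $x$ in compact subsets of $(0,1)$. This is among the standard Bessel identities collected in Appendix~\ref{A}. Taking the ratio at $x=\gamma$ and $x=\delta$ reduces \eqref{eq:Jgr} to showing that $e^{-m(F(\gamma)-F(\delta))}\le C'(\gamma/\delta)^{\beta m}$ for some $C'=C'(\gamma,\delta)$, since the algebraic prefactors $(1-\gamma^2)^{-1/4}$ and $(1-\delta^2)^{-1/4}$ are bounded quantities depending only on the fixed parameters.

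A direct computation gives $F'(x)=-\sqrt{1-x^2}/x$, and therefore
$$F(\gamma)-F(\delta)=\int_\gamma^\delta\frac{\sqrt{1-x^2}}{x}\,dx\ \ge\ \sqrt{1-\delta^2}\int_\gamma^\delta\frac{dx}{x}=\beta\log\frac{\delta}{\gamma},$$
where the inequality uses the monotonicity of $\sqrt{1-x^2}$ on $[\gamma,\delta]$ together with the definition $\beta=\sqrt{1-\delta^2}$. Exponentiating yields the desired bound $e^{-m(F(\gamma)-F(\delta))}\le(\gamma/\delta)^{\beta m}$, which is precisely where the exponent $\beta$ appearing in \eqref{eq:Jgr} originates.

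Combining these observations, for $m$ larger than some threshold $m_0=m_0(\gamma,\delta)$ the ratio $J_m(\gamma m)/J_m(\delta m)$ is bounded by a constant depending only on $\gamma,\delta$ times $(\gamma/\delta)^{\beta m}$. For $m\in(0,m_0]$ the inequality holds after enlarging the constant: both Bessel values depend continuously on $m$ (with a removable limit at $m=0$ coming from $J_m(z)\sim(z/2)^m/\Gamma(m+1)$), and $J_m(\delta m)>0$ because $\delta m<m$ lies strictly before the first positive zero $j_{m,1}$ of $J_m$, by the zero estimate derived in the Appendix. Consequently the ratio is bounded on the compact range $(0,m_0]$, which absorbs into~$C$.

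The main obstacle is ensuring that the error term in the Debye expansion is uniform in $m$ with an implicit constant depending only on $\gamma$ and $\delta$. This uniformity is classical --- it follows from a steepest descent analysis of the Schl\"afli contour integral --- but one has to verify that it is available in the precise form needed, ideally packaged as one of the Bessel facts recorded in the Appendix.
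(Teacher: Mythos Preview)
Your argument is correct but takes a genuinely different route from the paper. The paper proves the lemma by an elementary Sturm comparison: on $[\gamma m,\delta m]$ one has $x^2-m^2\le(\delta^2-1)m^2$, so $J_m$ is compared to a solution of the Euler equation $(xy')'+(\delta^2-1)m^2 y/x=0$, whose solutions $c_1x^{\beta m}+c_2x^{-\beta m}$ give the factor $(\gamma/\delta)^{\beta m}$ directly. Your approach instead invokes the Debye uniform asymptotic and reduces the inequality to the calculus fact $F(\gamma)-F(\delta)\ge\beta\log(\delta/\gamma)$, which you verify cleanly via $F'(x)=-\sqrt{1-x^2}/x$. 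The paper's method is more self-contained (it uses only the comparison theorems already collected in the appendix, and no asymptotic analysis) and, importantly, it is the template that is then reused verbatim for the hyperbolic and spherical radial equations in Lemma~\ref{l:comp1}, where no Debye-type expansion is readily available. Your method is shorter once the Debye expansion is granted and makes the origin of the exponent $\beta$ very transparent, but note that Appendix~\ref{A} in this paper does \emph{not} record that expansion; you would need to cite it externally (e.g.\ from Olver~\cite{Ol74}), which you correctly flag as a loose end.
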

\begin{proof} 
	The strategy is to apply the Sturm comparison theorem, see Theorem \ref{T:sturm-comp}. We apply the theorem to the Bessel function $J_m$ solving the Bessel equation
	\[\left( xJ_m'\left( x \right) \right)'+\frac{x^{2}-m^{2}}{x}J_{m}\left( x \right)=0,\]
	and a solution of the Euler equation 
	\begin{equation}
		\label{euler}
		\left( xy'\left( x \right) \right)'+\frac{\left( \delta^{2}-1 \right)m^{2}}{x}y\left( x \right)=0.
	\end{equation}

 	Let $y$ be the solution of \eqref{euler} satisfying the initial conditions
	\[y(\gamma m)=J_m(\gamma m)\quad {\text{and}}\quad  y'(\gamma m)=J_m'(\gamma m).\]
	We know that $J_m$ is positive and increasing on $[0,m]$. The latter can be verified by using the second derivative test and  inserting the argument of the first maximum of $J_m$ into the equation
	\[x^2J_m''(x)+xJ_m'(x)+(x^2-m^2)J_m(x)=0.\] 
	Moreover, notice that for $x\in[\gamma m,\delta m]$ we have
	\[x^2-m^2\le (\delta^2-1)m^2.\]
	Hence all the conditions in the comparison theorem are satisfied and we conclude that $y\left( x \right)\le J_{m}\left( x \right)$ on $\left[ \gamma m, \delta m \right]$.
	  
	Any solution of the Euler equation \eqref{euler} is on the form
	\[y(x)=c_1x^{m\beta}+c_2x^{-m\beta}.\]
	Using that 
	\[J_m(\gamma m)=y\left( \gamma m \right)>0 \text{ and }J_m'(\gamma m)=y'\left( \gamma m \right)>0,\]
	we conclude that  $c_1>0$ and $|c_2|<c_1\gamma^{2m\beta}m^{2m\beta}$. Thus
	\[ J_m(\gamma m)=c_1(\gamma m)^{m\beta}+c_2(\gamma m)^{-m\beta}<2c_1(\gamma m)^m\beta\]
	and
	\[y(\delta m)>qc_1(\delta m)^{m\beta},\] 
	where $q=q(\gamma, \delta)>0$.
	It follows that 
	\begin{align*}
		J_{m}\left( \gamma m \right)<2c_1\left( \gamma m \right)^{m\beta}<\frac{2}{q}\left( \frac{\gamma}{\delta} \right)^{m\beta}y\left( \delta m \right)<\frac{2}{q}\left( \frac{\gamma}{\delta} \right)^{m\beta}J_{m}\left( \delta m \right). 
	\end{align*}
\end{proof}
We can now prove the main result of this section. 
\begin{theorem}
	Assume that there is an $\alpha\in(0,1)$ and a constant $C(k,r,\alpha)$ such that for any solution $u_k$ of the Helmholtz equation \eqref{hh} the following three ball inequality holds
	\begin{equation}\label{eq:7}
		M_{u_k}(2r)\le C(k,r,\alpha)M_{u_k}(r)^{\alpha}M_{u_k}(4r)^{1-\alpha}.
	\end{equation}
	Then $C(k,r,\alpha)$ grows at least exponentially in $kr$. More precisely, $C\left( k,r,\alpha \right)\ge c e^{d\alpha kr},$ where $c$ and $d$ are absolute constants.
\end{theorem}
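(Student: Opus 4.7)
The plan is to exhibit a family of Helmholtz solutions on $\mathbb{R}^{2}$ that nearly saturates \eqref{eq:7}, forcing $C(k,r,\alpha)$ to be exponentially large. I use the separated solutions $u_k(\rho,\theta)=J_m(k\rho)\,e^{im\theta}$ of \eqref{hh}, choosing $m=\lceil Nkr\rceil$ for an absolute constant $N>4$ to be fixed later. Since $4kr\le m$ and $J_m$ is positive and strictly increasing on $[0,m]$ (as observed in the proof of Lemma~\ref{cor:2}), the maxima are attained on the spheres: $M_{u_k}(\rho)=J_m(k\rho)$ for $\rho\in\{r,2r,4r\}$, so rearranging \eqref{eq:7} gives
\[C(k,r,\alpha)\;\ge\;\frac{J_m(2kr)}{J_m(kr)^{\alpha}\,J_m(4kr)^{1-\alpha}}.\]

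Next I apply Lemma~\ref{cor:2} with $\gamma=1/N$ and $\delta=2/N$, yielding $J_m(kr)\le C_{0}\,2^{-\beta m}J_m(2kr)$ with $\beta=\sqrt{1-4/N^{2}}$; raising to the $\alpha$-th power contributes the factor $2^{\alpha\beta m}$ to the lower bound for $C$. The factor $J_m(4kr)^{1-\alpha}$ is handled by the crude estimate $J_m(4kr)\le J_m(m)\le 1$ (monotonicity on $[0,m]$ together with the standard bound $|J_m|\le 1$ for integer $m\ge 0$). A matching lower bound $J_m(2kr)\ge c_{1}\,2^{-\beta' m}$, with $\beta'$ comparable to $\beta$, follows either from the Debye uniform asymptotics of $J_m$ or from a Sturm comparison analogous to that in Lemma~\ref{cor:2}, carried out with an Euler equation whose coefficient involves $\gamma$ rather than $\delta$ (which reverses the direction of the comparison to $y\ge J_m$).

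Combining and taking logarithms, with $m=\lceil Nkr\rceil$, produces
\[\log C(k,r,\alpha)\;\ge\;Nkr\bigl[(2\alpha-1)\log 2+O(N^{-2})\bigr]-O(\log(kr)).\]
For $\alpha>1/2$ any fixed $N$ (say $N=5$) already gives a rate that is a positive absolute multiple of $\alpha kr$, and near the critical value $\alpha=1/2$ the positive subleading correction $9m/(8N^{2})$ (obtained from the expansion of the Debye asymptotics) takes over, so that choosing $N$ close to $4$ still contributes a positive multiple of $kr$. This yields $\log C(k,r,\alpha)\ge d\alpha\,kr-O(1)$ for an absolute $d>0$, hence $C(k,r,\alpha)\ge c\,e^{d\alpha kr}$. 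The chief difficulty is the near cancellation between the exponential gain $2^{\alpha\beta m}$ and the exponential decay of $J_m(2kr)^{1-\alpha}$; only a careful analysis of the first nontrivial correction makes the bracket above positive uniformly in $\alpha$ and yields an exponent proportional to $\alpha$ rather than to $2\alpha-1$.
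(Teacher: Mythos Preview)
Your approach has a genuine gap: the choice $m=\lceil Nkr\rceil$ with $N>4$ forces all three arguments $kr,2kr,4kr$ into the increasing regime of $J_m$, so the factor $J_m(4kr)^{1-\alpha}$ in the denominator cannot be discarded. Bounding it by $1$ (or even using sharp Debye asymptotics) leaves you with an exponent whose leading term is $(2\alpha-1)\log 2$, exactly as you compute. For $\alpha<1/2$ this is negative, and the $O(N^{-2})$ correction does \emph{not} rescue it: the correction is $(3-15\alpha/4)/N^{2}$, so at $\alpha=1/4$ and any $N>4$ the bracket is at most $-\tfrac12\log 2+33/(16N^{2})<0$. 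Even at the limiting value $N=4$ (where $h(4/N)=0$) the exact bracket equals $\alpha\, h(1/4)-h(1/2)\approx 1.095\,\alpha-0.451$, which is negative for $\alpha\lesssim 0.41$. Hence your conclusion ``$\log C\ge d\alpha\,kr-O(1)$ for an absolute $d>0$'' does not follow; for small $\alpha$ your bound actually decays in $kr$.

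The paper avoids this difficulty by choosing $m$ on the \emph{other} side: one picks $m$ with $6kr/5<m<3kr/2$, so that $2kr$ already exceeds the first local maximum of $J_m$ (which lies in $(m,m(1+\varepsilon(m)))$). Then $M_{u_k}(4r)=M_{u_k}(2r)$ and the three ball inequality collapses to $(M_{u_k}(2r)/M_{u_k}(r))^{\alpha}\le C(k,r,\alpha)$, after which a single application of Lemma~\ref{cor:2} (with, say, $\gamma=5/6$, $\delta=11/12$) gives the exponential lower bound directly, with an exponent proportional to $\alpha m\asymp \alpha kr$ for every $\alpha\in(0,1)$. The range $kr\le C_1$ is handled trivially with $u_k=J_0(kr)$. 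The point you are missing is precisely this cancellation of the $M_{u_k}(4r)$ term; without it there is an unavoidable competition between the gain $2^{\alpha\beta m}$ and the loss $J_m(2kr)^{1-\alpha}$, and no choice of $N>4$ makes that competition favorable uniformly in $\alpha$.
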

\begin{proof}
	Consider solutions of the Helmholtz equation on the form 
	\[u_k(r,\theta)=J_m(kr)\sin(m\theta).\]
	 The maximal function then simplifies to
	\[M_{u_k}(r)=\max_{0\le x\le kr}|J_m(x)|.\] 
	We now use the fact that for  $m>0$ the maximum of $J_{m}(x)$ is attained in the interval $\left( m,m\left( 1+\varepsilon\left( m \right) \right) \right)$, where $\varepsilon\left( m \right)\to 0$ as $m\to \infty$. This is a well known result on the asymptotic of the first zero of the Bessel functions, for the convenience of the reader we include a simple proof in Appendix~\ref{A}.
	We choose $m_0$ such that $\varepsilon\left( m \right)\le 1/3$ when $m\ge m_0$. Assume first that 
	\[kr>m_1=\max\{4, 2m_0/3\}.\]
	Then given $r$ we can find $m\ge m_0$ such that \[6kr/5<m<3kr/2.\] This implies $kr<5m/6$ and $2kr>4/3m$.
	Then $M_{u_k}(4r)=M_{u_k}(2r)$ and we can reduce \eqref{eq:7} to 
	\[\left(\frac{ M_{u_k}(2r)}{M_{u_k}\left( r \right)}\right)^{\alpha}\le C(k,r,\alpha).\] 
	Set $\gamma=5/6$ and $\delta=\frac{1+\gamma}{2}=\frac{11}{12}$.
	Applying Lemma~\ref{cor:2} together with
	\[M_{u_k}(2r)>J_m(m)>J_m(\delta m)\]
	and $M_{u_k}(r)<J_m(\gamma m)$ we conclude that 
	\[C(k,r,\alpha)\ge \left(\frac{M_{u_k}(2r)}{M_{u_k}(r)}\right)^{\alpha}\ge ce^{\alpha d m},\]
	for some positive constant $d$ that can be computed. Finally, since $m>6kr/5$ we get the required estimate when $r>k^{-1}m_1$. 
			
	Now for $r\le k^{-1}m_1$ we consider the solution $u_k(r,\theta)=J_0(kr)$. Then $M_{u_k}(r)=J_0(0)$ since \[J_0(0)=\max_{x\ge 0}|J_0(x)|,\] and we conclude that $C(k,r,\alpha)\ge 1$ for any $r>0$.
	Choosing $c < e^{-\alpha d m_{1}}$ we have for all $r>0$ that \[C(k,r,\alpha) \geq c e^{\alpha d k r}.\qedhere\]  
\end{proof}
%%%%%%%%%%%%%%%%%%%%%%%%%%%%%%%%%%%%%
	
\subsection{Solutions of the Helmholtz equation on the sphere and hyperbolic space}\label{sec:2.2}
In this section we repeat the argument of the sharpness of the three ball inequality on the hyperbolic space and sphere. We show in particular that assumptions on the sign of the curvature do not lead to better behavior of the constant in the three ball inequality. Again, we use the spherical symmetry of the spaces and separation of variables to construct a solution of \eqref{hh} that is the product of a radial and a spherical factor. On the sphere the radial part is given by Legendre polynomials. For the hyperbolic space the radial part is also explicitly known, see \cite[p.~4222 eq.~(2.26)]{Ca94}. Once again, in our argument we only use  the differential equation for the radial part.

We define
\begin{equation}\label{sin}
	\sin_{K}\left( r \right)=
	\begin{cases}
		\frac{\sin\left( \sqrt{K}r \right)}{\sqrt{K}},    & \quad \text{when } K>0  \\
		r,                                                & \quad \text{when } K=0  \\
		\frac{\sinh\left( \sqrt{-K}r \right)}{\sqrt{-K}}, & \quad \text{when } K<0 \\
	\end{cases}.
\end{equation}
Furthermore, we  use the associated functions
$\cos_{K}\left( r \right)=\left( \sin_{K}\left( r \right) \right)',$ $\cot_{K}\left( r \right)=\frac{\cos_{K}(r)}{\sin_{K}\left( r \right)}$, and $\tan_{K}\left( r \right)=\frac{1}{\cot_{K}\left( r \right)}$. Then the Laplacian of a simply connected $n$-dimensional Riemannian manifold $(M, \mathbf{g})$ with constant sectional curvature $K$ is given in polar coordinates by
\[\Delta_M=\frac{d^{2}}{dr^{2}}+\left( n-1 \right)\cot_{K}\left( r \right)\frac{d}{dr}+\frac{1}{\sin_{K}^{2}\left( r \right)}\Delta_{\mathbb{S}^{n-1}}.\]
In this section we work in dimension two. Assume that $u_k\left( r,\theta \right)=R\left( r \right)\Theta(\theta)$ is a solution of the Helmholtz equation. Then $R\left( r \right)$ satisfies the equation
\begin{equation}
	\label{radial}
	\sin_{K}^{2}\left( r \right)\left( \frac{R''\left( r \right)+\cot_{K}\left( r \right)R'\left( r \right) }{R(r)}+k^{2} \right)=-\frac{\Delta_{\mathbb{S}^{1}}\Theta\left( \theta \right)}{\Theta\left( \theta \right)}
	=m^2.
\end{equation}

Let $\kappa=K/k^2$ and let $L_{\kappa,m}\left( \rho \right)$ be the solution of the differential equation
\begin{multline}\label{eq:radB}
	\sin^{2}_{\kappa}\left(  \rho\right)L_{\kappa,m}''\left( \rho \right)+\sin_{\kappa}\left( \rho \right)\cos_{\kappa}\left( \rho \right)L_{\kappa,m}'\left( \rho \right) \\
	+\left( \sin_{\kappa}^{2}\left( \rho \right)-m^{2} \right)L_{\kappa,m}\left( \rho \right)=0,
\end{multline}
where $L_{\kappa,m}$ is well defined at $\rho=0$ and positive on some interval $(0,\varepsilon)$. Then for $m>0$ we have $L_{\kappa,m}\left( 0 \right)=0$. Note that when $K=0$ this equation becomes the Bessel equation. Setting
$R\left( r \right)=L_{\kappa,m}\left( kr\right)$
we get a solution to \eqref{radial}.
Then \eqref{eq:radB} can be rewritten in the Sturm-Liouville form as
\begin{align}\label{eq:11}
	\left( \sin_{\kappa}\left( \rho \right) L_{\kappa, m}'\left( \rho \right) \right)'+ \frac{\sin_{\kappa}^{2}\left( \rho \right)-m ^{2}}{\sin_{\kappa}\left( \rho \right)} L_{\kappa,m}\left( \rho \right) =0. 
\end{align}
	
We begin by estimating the maximum point of $L_{\kappa,m}$ from below. Let 
\begin{equation*}
	R_{\kappa}=\begin{cases}
	\infty,&\kappa\le 0\\
	\frac{\pi}{2\sqrt{\kappa}},&\kappa> 0
	\end{cases}.
\end{equation*}
Note that for $\rho\le R_{\kappa}$ we have that $\sin_{\kappa}\left( \rho \right) $ is increasing, or equivalently that $\cos_{\kappa}\left( \rho \right)\ge 0$.
\begin{proposition}\label{cl:1}
	Let $0 < \rho^*_1<\rho^*_2<\dots<R_{\kappa}$ be the points where $L_{\kappa,m}$ attains local maximums and minimums before $R_{\kappa}$. Then $\left|L_{\kappa,m}\left( \rho^*_i \right)\right|$ is a decreasing sequence in $i$. Moreover, the first local maximum $\rho^*_1$  satisfies $\rho^*_1\ge \sin_{\kappa}^{-1}\left( m \right)$.
\end{proposition}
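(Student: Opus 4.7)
The plan is to prove the two assertions separately. The lower bound on $\rho_1^*$ follows from evaluating the ODE at the critical point itself, while the monotonicity of $|L_{\kappa,m}(\rho_i^*)|$ is a Sonin--Polya type energy argument applied to the Sturm--Liouville form \eqref{eq:11}.

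For the first part, I would rewrite \eqref{eq:radB} in the form
\[
L_{\kappa,m}''(\rho)=-\cot_\kappa(\rho)L_{\kappa,m}'(\rho)-\frac{\sin_\kappa^2(\rho)-m^2}{\sin_\kappa^2(\rho)}L_{\kappa,m}(\rho).
\]
At the first local maximum $\rho_1^*$ one has $L_{\kappa,m}(\rho_1^*)>0$, $L_{\kappa,m}'(\rho_1^*)=0$ and $L_{\kappa,m}''(\rho_1^*)\le 0$, so substituting into this identity forces $\sin_\kappa^2(\rho_1^*)\ge m^2$, which is exactly $\rho_1^*\ge\sin_\kappa^{-1}(m)$.

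For the second part, I would work on the interval $(\sin_\kappa^{-1}(m),R_\kappa)$, where $p(\rho):=\sin_\kappa(\rho)$ and $q(\rho):=(\sin_\kappa^2(\rho)-m^2)/\sin_\kappa(\rho)$ are both positive and \eqref{eq:11} takes the form $(pL_{\kappa,m}')'+qL_{\kappa,m}=0$. The natural Sonin--Polya energy is
\[
W(\rho):=L_{\kappa,m}(\rho)^2+\frac{p(\rho)}{q(\rho)}\bigl(L_{\kappa,m}'(\rho)\bigr)^2=L_{\kappa,m}^2+\frac{\sin_\kappa^2}{\sin_\kappa^2-m^2}(L_{\kappa,m}')^2.
\]
A short computation, using $(pL_{\kappa,m}')'=-qL_{\kappa,m}$ to cancel the $L_{\kappa,m}L_{\kappa,m}'$ cross term, gives
\[
W'(\rho)=-\frac{(pL_{\kappa,m}')^2\,(pq)'(\rho)}{(pq)^2}=-\frac{2\sin_\kappa^3\cos_\kappa}{(\sin_\kappa^2-m^2)^2}(L_{\kappa,m}')^2\le 0,
\]
since $\sin_\kappa,\cos_\kappa\ge 0$ on $(0,R_\kappa)$. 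At each critical point $\rho_i^*$ strictly inside the interval, the second summand of $W$ vanishes, so $W(\rho_i^*)=L_{\kappa,m}(\rho_i^*)^2$, and the monotonicity of $W$ delivers the chain $|L_{\kappa,m}(\rho_1^*)|\ge|L_{\kappa,m}(\rho_2^*)|\ge\cdots$.

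The one delicate point I anticipate is the borderline case $\rho_1^*=\sin_\kappa^{-1}(m)$, where the prefactor $\sin_\kappa^2/(\sin_\kappa^2-m^2)$ in $W$ blows up at $\rho_1^*$. At such a point the ODE itself already forces $L_{\kappa,m}''(\rho_1^*)=0$, so $L_{\kappa,m}'$ vanishes at least to order two at $\rho_1^*$; a short Taylor expansion then shows $W(\rho)\to L_{\kappa,m}(\rho_1^*)^2$ as $\rho\to(\rho_1^*)^+$, which is enough to compare with $W(\rho_2^*)$ and recover the case $i=1$.
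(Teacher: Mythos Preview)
Your proof is correct and follows essentially the same route as the paper: the second-derivative test at $\rho_1^*$ for the lower bound, and the Sonin--P\'olya mechanism on $(\sin_\kappa^{-1}(m),R_\kappa)$ for the monotonicity. The only difference is cosmetic --- the paper simply cites the Sonin--P\'olya oscillation theorem after checking $p>0$, $q\ne 0$, and $(pq)'=2\sin_\kappa\cos_\kappa>0$, whereas you unfold that theorem by writing down the energy $W$ and differentiating; your extra care with the borderline case $\rho_1^*=\sin_\kappa^{-1}(m)$ is a point the paper does not address explicitly.
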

\begin{proof}
	At $\rho^*_1$ we have $L_{\kappa,m}'(\rho^*_1)=0$ and \eqref{eq:11} implies that
	\[\sin_{\kappa}^{2}\left( \rho^*_1 \right) L_{\kappa,m}''\left( \rho^*_1 \right)+\left( \sin_{\kappa}^{2}\left( \rho^*_1 \right)-m^{2} \right)L_{\kappa,m}\left(\rho^*_1\right) =0.\] 
	By the second derivative test it is not possible to have a maximum before $\sin_{\kappa}^{-1}\left( m\right)$, implying the lower bound for the first local extremum.
			
	The remaining part of the proposition follows from Sonin-P\'olya oscillation theorem, see Theorem \ref{T:Sonin-Polya}. The conditions in the oscillation theorem are satisfied on the interval $\left( \sin_{\kappa}^{-1}\left( m \right),R_{\kappa}\right)$ since $\sin_{\kappa}\left( \rho \right)>0$, 
	\[(\sin_{\kappa}^{2}\left( \rho \right)-m^{2})/\sin_{\kappa}\left( \rho \right)\ne 0,\]
	and \[\left( \sin_{\kappa}\left( \rho \right) \frac{\sin_{\kappa}^{2}\left( \rho \right)-m^{2}}{\sin_{\kappa}\left( \rho \right)}\right)'=2\cos_{\kappa}\left( \rho \right)\sin_{\kappa}\left( \rho \right)>0.\] 
	Thus the sequence $|L_{\kappa,m}(\rho_i^*)|$ is decreasing.
\end{proof}
	
\begin{remark}\label{re:m_0} For $m=0$, by analyzing the differential equation \eqref{eq:radB}, we see that $L'_{\kappa,0}(0)=0$. Then the proof of Proposition~\ref{cl:1} implies that $L_{\kappa,0}(\rho)$ satisfies $L_{\kappa,0}(0)\ge |L_{\kappa,0}(\rho)|$ for $\rho>0$.
\end{remark}
 Now our aim is to prove an analog of Lemma~\ref{cor:2}. The next four results show how we can control the ratio of two values of $L_{\kappa, m}$. 
	
\begin{lemma}\label{l:comp1}
	Let $\rho_2\in(0,R_\kappa)$ and $\delta\in(0,1)$ satisfy the inequality
	$\sin_\kappa (\rho_2)\leq \delta m$. Then for $\rho_1<\rho_2$ and $\beta=\sqrt{1-\delta^{2}}$ we have the bound
	\[\frac{L_{\kappa,m}(\rho_2)}{L_{\kappa,m}(\rho_1)}\ge \frac{1}{2}\left[ \left(\frac{\tan_\kappa(\rho_2/2)}{\tan_\kappa(\rho_1/2) }\right)^{\beta m}-\left(\frac{\tan_\kappa(\rho_2/2)}{\tan_\kappa(\rho_1/2) }\right)^{-\beta m}\right].\]
\end{lemma}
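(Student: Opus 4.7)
The plan is to mimic the proof of Lemma~\ref{cor:2} in the curved setting: compare $L_{\kappa,m}$ with a solution of an Euler-type equation via Sturm's theorem, and then read off the bound from an explicit formula for the comparison solution.

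First, on $[\rho_1,\rho_2]$, the hypothesis together with monotonicity of $\sin_\kappa$ on $[0,R_\kappa]$ gives $\sin_\kappa^2(\rho)-m^2\le -\beta^2 m^2$, so the coefficient in~\eqref{eq:11} is pointwise dominated by that of the comparison equation
\[
(\sin_\kappa(\rho)\,y'(\rho))' - \frac{\beta^2 m^2}{\sin_\kappa(\rho)}\,y(\rho)=0.
\]
I would solve this explicitly using the substitution $t=\log\tan_\kappa(\rho/2)$. The half-angle identity $\sin_\kappa(\rho)=2\sin_\kappa(\rho/2)\cos_\kappa(\rho/2)$ combined with $\cos_\kappa^2(\rho/2)+\kappa\sin_\kappa^2(\rho/2)=1$ yields $dt/d\rho=1/\sin_\kappa(\rho)$, which converts the comparison equation into the constant-coefficient ODE $\ddot y=\beta^2 m^2 y$ with general solution
\[
y(\rho)=c_1\tan_\kappa^{\beta m}(\rho/2)+c_2\tan_\kappa^{-\beta m}(\rho/2).
\]

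Next I would apply Theorem~\ref{T:sturm-comp} to $y$ with the same value and derivative as $L_{\kappa,m}$ at $\rho_1$. By Proposition~\ref{cl:1}, the first local maximum of $L_{\kappa,m}$ lies past $\sin_\kappa^{-1}(m)>\rho_2$, so both initial conditions are strictly positive; Sturm comparison then yields $L_{\kappa,m}\ge y$ on $[\rho_1,\rho_2]$. Setting $T_i=\tan_\kappa(\rho_i/2)$, $A=c_1 T_1^{\beta m}$, $B=c_2 T_1^{-\beta m}$, and $s=(T_2/T_1)^{\beta m}\ge 1$, the positivity of $y(\rho_1)=A+B$ and of $y'(\rho_1)\propto A-B$ forces $A>|B|$. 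A brief case analysis on the signs of $B$ and of $s-3/s$ will then reduce the lemma to the elementary inequality
\[
\frac{As+B/s}{A+B}\ge\frac{s-s^{-1}}{2},
\]
which, combined with $L_{\kappa,m}(\rho_2)\ge y(\rho_2)$ and $L_{\kappa,m}(\rho_1)=y(\rho_1)$, yields the stated bound.

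The hard part will be identifying the substitution $t=\log\tan_\kappa(\rho/2)$: without it, the comparison equation has no obviously tractable closed form. Once that substitution is in hand, the appearance of $\tan_\kappa(\rho/2)$ in the final inequality becomes transparent, and both the Sturm comparison step and the concluding algebra are routine analogues of the Euclidean argument in Lemma~\ref{cor:2}.
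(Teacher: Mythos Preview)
Your proposal is correct and follows essentially the same route as the paper: compare $L_{\kappa,m}$ via Theorem~\ref{T:sturm-comp} to the explicit solution $y(\rho)=c_1\tan_\kappa^{\beta m}(\rho/2)+c_2\tan_\kappa^{-\beta m}(\rho/2)$ of the auxiliary equation, use Proposition~\ref{cl:1} to ensure positive initial data, and then read off the ratio bound. The only cosmetic difference is that the paper estimates numerator and denominator separately from $|c_2|<c_1\tan_\kappa^{2\beta m}(\rho_1/2)$, whereas you package the same information into the single ratio inequality $(As+B/s)/(A+B)\ge (s-s^{-1})/2$; both arguments are equivalent, and your explicit identification of the substitution $t=\log\tan_\kappa(\rho/2)$ is a point the paper leaves implicit.
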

\begin{proof} We compare the function $L_{\kappa,m}$ to a solution of the equation
	\begin{equation}\label{eq:diff}
	(\sin_\kappa (\rho) y'(\rho))'+\frac{m^2(\delta^2-1)}{\sin_\kappa (\rho)}y(\rho)=0.
	\end{equation}
    By the assumption we have 
	\[\sin_\kappa^2(\rho)-m^2\le (\delta^2-1)m^2=-\beta^2 m^2\]
	on the interval $\left[ \rho_1,\rho_2 \right]$. Let $y$ be the solution to \eqref{eq:diff} that satisfies the initial conditions \[y(\rho_1)=L_{\kappa,m}(\rho_1) \text{ and }y'(\rho_1)=L'_{\kappa,m}(\rho_1).\] Then the comparison theorem implies that $L_{\kappa,m}(\rho_2)>y(\rho_2)$. 
	
	The explicit solution to \eqref{eq:diff} is given by
	\[y(\rho)=c_1\tan^{\beta m}_\kappa(\rho/2)+c_2\tan^{-\beta m}_{\kappa}(\rho/2).\] 
	The first maximum $\rho_1^*$ of $L_{\kappa,m}$ satisfies $\sin_\kappa(\rho^*_1) \ge m$ implying that $\rho_2<\rho_1^*$. Therefore $L_{\kappa,m}(\rho_1)>0$ and $L'_{\kappa, m}(\rho_1)>0$. Thus we have the inequality
	\[-c_1\tan_{\kappa}^{2\beta m}(\rho_1/2)<c_2<c_1\tan_{\kappa}^{2\beta m}(\rho_1/2),\quad \]
	since $(\tan_\kappa \left( \rho/2 \right))'>0$. We conclude that $c_1>0$ and similarly to Lemma~\ref{cor:2} we get
	\[L_{\kappa, m}(\rho_2)>y(\rho_2)>c_1(\tan^{\beta m}_\kappa(\rho_2/2)-\tan_\kappa^{2\beta m}(\rho_1/2)\tan^{-\beta m}_\kappa(\rho_2/2)).\]
	The estimate of $c_2$ from below implies that
	\[L_{\kappa,m}(\rho_1)=y(\rho_1)<2c_1\tan_\kappa^{\beta m}(\rho_1/2).\]
	Combining the last two inequalities gives the result.
\end{proof} 
\begin{corollary} Suppose that $K>0$ and that $\rho_1<\rho_2<\min\{R_{\kappa}, m\delta\}$ for some $\delta\in(0,1)$. For $\beta=\sqrt{1-\delta^2}$ we have the estimate
	\begin{equation}\label{eq:comp1+}
		\frac{L_{\kappa,m}(\rho_2)}{L_{\kappa,m}(\rho_1)}\ge \frac12\left[\left(\frac{\rho_2}{\rho_1}\right)^{\beta m}-\left(\frac{\rho_2}{\rho_1}\right)^{-\beta m}\right]. 
	\end{equation}
\end{corollary}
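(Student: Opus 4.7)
The plan is to derive the bound directly from Lemma~\ref{l:comp1} by replacing the ratio of tangents on the right-hand side with the cruder ratio $\rho_2/\rho_1$. First I would check that the hypothesis of the lemma is met: since $\kappa>0$ and $\rho_2<R_\kappa$, the elementary inequality $\sin_\kappa(\rho_2)=\sin(\sqrt{\kappa}\rho_2)/\sqrt{\kappa}\le \rho_2$ combined with the assumption $\rho_2<m\delta$ yields $\sin_\kappa(\rho_2)\le m\delta$, so Lemma~\ref{l:comp1} applies and gives the tangent-ratio lower bound for $L_{\kappa,m}(\rho_2)/L_{\kappa,m}(\rho_1)$.

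The core step is an elementary monotonicity claim: for $\kappa>0$ the function $x\mapsto \tan(x)/x$ is strictly increasing on $(0,\pi/2)$. Since $\rho_2<R_\kappa=\pi/(2\sqrt{\kappa})$, both $\sqrt{\kappa}\rho_1/2$ and $\sqrt{\kappa}\rho_2/2$ lie in $(0,\pi/4)$, so this monotonicity yields
\[
\frac{\tan(\sqrt{\kappa}\rho_2/2)}{\sqrt{\kappa}\rho_2/2}\ge\frac{\tan(\sqrt{\kappa}\rho_1/2)}{\sqrt{\kappa}\rho_1/2},
\]
which after rearranging and dividing numerator and denominator by $\sqrt{\kappa}$ is exactly
\[
\frac{\tan_\kappa(\rho_2/2)}{\tan_\kappa(\rho_1/2)}\ge \frac{\rho_2}{\rho_1}.
\]

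To finish, I would note that for any $a>0$ the map $t\mapsto t^{a}-t^{-a}$ is increasing on $[1,\infty)$. Since $\rho_1<\rho_2$, both $\rho_2/\rho_1$ and $\tan_\kappa(\rho_2/2)/\tan_\kappa(\rho_1/2)$ exceed $1$, so substituting the lower bound from the previous step into the conclusion of Lemma~\ref{l:comp1} gives \eqref{eq:comp1+}. There is no real obstacle to be overcome here; the statement is a genuine corollary, and the argument amounts to combining the lemma with the convexity-type fact that $\tan(x)/x$ is increasing on the relevant interval. The only point worth checking carefully is that all arguments stay inside the domain where $\tan_\kappa$ is positive and monotone, which is ensured by the hypothesis $\rho_2<R_\kappa$.
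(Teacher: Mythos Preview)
Your proof is correct and essentially identical to the paper's: both verify the hypothesis of Lemma~\ref{l:comp1} via $\sin_\kappa(\rho_2)\le\rho_2<m\delta$ and then bound the tangent ratio below by $\rho_2/\rho_1$. The paper phrases the key elementary step as the inequality $b\tan x\ge\tan(bx)$ for $b\in(0,1)$, which is equivalent to your monotonicity of $\tan(x)/x$, and leaves implicit the monotonicity of $t\mapsto t^{a}-t^{-a}$ that you spell out.
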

	
\begin{proof}
	We note that $\sin_\kappa(\rho_2)<\rho_2<m\delta$.
	Applying Lemma~\ref{l:comp1} and using the elementary inequality $b\tan x\ge \tan bx$ for $b\in(0,1)$,  the result follows since 
	\[\frac{\tan_\kappa(\rho_2/2)}{\tan_\kappa(\rho_1/2)}=\frac{\tan (\sqrt{\kappa}\rho_2/2)}{\tan(\sqrt{\kappa}\rho_1/2)} \ge \frac{\rho_2}{\rho_1}.\qedhere \]
\end{proof}
	
\begin{corollary}
	Let $K<0$ and suppose that \[\rho_1<\rho_2< \min\{R_{|\kappa|}, 2m\delta/3\}\] for some $\delta\in(0,1)$. Then for $\beta=\sqrt{1-\delta^2}$ and $A = \sin_\kappa \left( \rho_2 \right)/\rho_2$ we have
	\begin{equation}\label{eq:comp1-}
		\frac{L_{\kappa,m}(\rho_2)}{L_{\kappa,m}(\rho_1)}\ge \frac12\left[\left(\frac{\rho_2}{A\rho_1}\right)^{\beta m}-\left(\frac{\rho_2}{A\rho_1}\right)^{-\beta m}\right]. 
	\end{equation}
\end{corollary}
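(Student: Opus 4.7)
The plan follows the template of the $K>0$ case, with an extra factor $A\ge 1$ appearing in the final estimate to compensate for the fact that $\sin_\kappa$ now grows faster than its argument. First I would verify the hypothesis $\sin_\kappa(\rho_2)\le m\delta$ of Lemma~\ref{l:comp1}. Writing $a=\sqrt{|\kappa|}$, the bound $\rho_2<R_{|\kappa|}=\pi/(2a)$ puts $a\rho_2$ in $(0,\pi/2)$, where $\sinh(x)/x$ is increasing and bounded by $\sinh(\pi/2)/(\pi/2)<3/2$. Hence $\sin_\kappa(\rho_2)=\rho_2\cdot\sinh(a\rho_2)/(a\rho_2)<(3/2)\rho_2$, and the hypothesis $\rho_2<2m\delta/3$ gives $\sin_\kappa(\rho_2)<m\delta$. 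In passing this shows $\rho_2<\rho_1^*$ by Proposition~\ref{cl:1}, so $L_{\kappa,m}(\rho_1)$ and $L_{\kappa,m}(\rho_2)$ are positive and the ratio in the statement is well defined.

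Since the function $t\mapsto t^{\beta m}-t^{-\beta m}$ is strictly increasing on $(0,\infty)$, applying Lemma~\ref{l:comp1} reduces the corollary to the single comparison
\[\frac{\tan_\kappa(\rho_2/2)}{\tan_\kappa(\rho_1/2)}\ge\frac{\rho_2}{A\rho_1}.\]
Unlike in the spherical case, the naive bound $\tan_\kappa(\rho_2/2)/\tan_\kappa(\rho_1/2)\ge\rho_2/\rho_1$ is false here because $\tanh$ grows sublinearly, so the factor $A$ cannot be dispensed with.

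To prove the displayed inequality I would substitute $y_j=a\rho_j/2$, so that $\tan_\kappa(\rho_j/2)=\tanh(y_j)/a$ and, via $\sinh(2y_2)=2\sinh(y_2)\cosh(y_2)$, $A=\sinh(y_2)\cosh(y_2)/y_2$. A short algebraic manipulation converts the target into the elementary bound
\[y_1\sinh^2(y_2)\ge y_2^2\tanh(y_1),\qquad 0<y_1\le y_2.\]
Since $\sinh(y)/y$ is increasing, $\sinh^2(y_2)/y_2^2\ge\sinh^2(y_1)/y_1^2$, so the left side is at least $\sinh^2(y_1)/y_1$. It then suffices to observe $\sinh^2(y_1)/y_1\ge\tanh(y_1)$, equivalently $\sinh(2y_1)\ge 2y_1$, which is immediate.

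The main obstacle is the bookkeeping in the third step: identifying the right form of the tan-ratio comparison so that it aligns with $\rho_2/(A\rho_1)$ rather than the natural but too strong $\rho_2/\rho_1$. Once the substitution $y=a\rho/2$ is made, everything boils down to two standard monotonicity facts about $\sinh$, and the argument closes in a few lines.
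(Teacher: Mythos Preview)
Your proof is correct and follows the same architecture as the paper's: first verify $\sin_\kappa(\rho_2)\le m\delta$ so that Lemma~\ref{l:comp1} applies, then reduce everything to the single comparison
\[
\frac{\tan_\kappa(\rho_2/2)}{\tan_\kappa(\rho_1/2)}\;\ge\;\frac{\rho_2}{A\rho_1}.
\]
The difference is only in how this last inequality is established. The paper records the logarithmic-derivative bound $(\log\tanh x)'=2/\sinh(2x)\ge 1/(Ax)$ on $(0,y_2]$ and integrates; that route literally produces $\tanh y_2/\tanh y_1\ge(\rho_2/\rho_1)^{1/A}$, which is what is actually used later and is enough for the application, but is not exactly the quantity $\rho_2/(A\rho_1)$ appearing in the statement. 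Your substitution $y_j=a\rho_j/2$ followed by the reduction to $y_1\sinh^2 y_2\ge y_2^2\tanh y_1$ (i.e.\ monotonicity of $\sinh y/y$ plus $\sinh(2y_1)\ge 2y_1$) is a cleaner direct verification that lands precisely on the inequality asserted in the corollary. Either way the work is a few lines of elementary hyperbolic calculus.
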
	
\begin{proof} Since $\sqrt{|\kappa|} \rho<\pi/2$ and $\sinh$ is convex we have 
	\[\sin_{\kappa}\left( \rho_2 \right)\le 2\rho_2\sinh\left( \pi/2 \right)/\pi<3\rho_2/2<m\delta.\]	
	Applying Lemma~\ref{l:comp1} together with 
	\[(\log(\tanh x))'\ge\frac{ \rho_2\sqrt{-\kappa}}{\sinh \left( \rho_2\sqrt{-\kappa} \right)x} \text{ for } x<\rho_2\sqrt{-\kappa}/2,\]
	 gives \eqref{eq:comp1-}, since
	\begin{equation*}
		\frac{\tan_\kappa(\rho_2/2)}{\tan_\kappa(\rho_1/2)}=\frac{\tanh(\sqrt{|\kappa|}\rho_2/2)}{\tanh(\sqrt{|\kappa|}\rho_1/2)}\ge \frac{\rho_2}{A\rho_1}.\qedhere
	\end{equation*}
\end{proof}
We want to estimate the ratio of the values of $L_{\kappa,m}$ at two points $\rho_2>\rho_1>\sin_\kappa^{-1}(m)$. In contrast with the Bessel functions, we do not locate the maximum precisely.
					
    \begin{lemma}\label{lem:up}
		Suppose that $0<\rho_1<R_{|\kappa|}$ and $\sin_\kappa (\rho_1)>\xi m$, where $\xi>1$. There is an absolute constant $C > 0$ such that
		\begin{equation}\label{eq:comp2}
			\frac{\max_{\rho} |L_{\kappa,m}(\rho)|}{\max_{\rho\le \rho_1}|L_{\kappa,m}(\rho)|}\le 1+\frac{C}{(\xi-1)m}.
		\end{equation}
	\end{lemma}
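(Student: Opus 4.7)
The first step is structural. By Proposition~\ref{cl:1} and the Sonin--Pólya theorem, the global maximum $\max_\rho|L_{\kappa,m}(\rho)|$ is attained at the first local extremum $\rho_1^*$, and $L_{\kappa,m}$ is positive and strictly increasing on $[0,\rho_1^*]$. Consequently, if $\rho_1\ge\rho_1^*$ then $\max_{\rho\le\rho_1}|L_{\kappa,m}|=L_{\kappa,m}(\rho_1^*)=\max_\rho|L_{\kappa,m}|$ and the required inequality is trivial. It therefore suffices to treat the case $\rho_1<\rho_1^*$, in which $\max_{\rho\le\rho_1}|L_{\kappa,m}|=L_{\kappa,m}(\rho_1)>0$.

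The second step is a monotone energy argument. Set $p(\rho):=1-m^2/\sin_\kappa^2(\rho)$ and
\[
E(\rho) = L_{\kappa,m}(\rho)^2 + \frac{L_{\kappa,m}'(\rho)^2}{p(\rho)},
\]
which is well-defined on $(\sin_\kappa^{-1}(m),R_{|\kappa|})$. Differentiating and using the Sturm--Liouville form \eqref{eq:11} gives, after the cancellations $p'+2p\cot_\kappa=2\cot_\kappa$,
\[
E'(\rho) = -\frac{2\cot_\kappa(\rho)\,L_{\kappa,m}'(\rho)^2}{p(\rho)^2}\le 0,
\]
so $E$ is nonincreasing on this interval. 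Since $L_{\kappa,m}'(\rho_1^*)=0$, this yields
\[
\bigl(\max_\rho|L_{\kappa,m}|\bigr)^2 = E(\rho_1^*)\le E(\rho_1) = L_{\kappa,m}(\rho_1)^2+\frac{L_{\kappa,m}'(\rho_1)^2}{p(\rho_1)},
\]
and dividing by $L_{\kappa,m}(\rho_1)^2$ reduces the statement to bounding the correction term $L_{\kappa,m}'(\rho_1)^2/(p(\rho_1)L_{\kappa,m}(\rho_1)^2)$ by a constant multiple of $1/((\xi-1)m)$.

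The main obstacle is precisely this last bound. For it I would use the integrated equation
\[
\sin_\kappa(\rho_1)\,L_{\kappa,m}'(\rho_1) = \int_{\rho_1}^{\rho_1^*}\frac{\sin_\kappa^2(\sigma)-m^2}{\sin_\kappa(\sigma)}\,L_{\kappa,m}(\sigma)\,d\sigma,
\]
obtained by integrating \eqref{eq:11} against $1$ and using $L_{\kappa,m}'(\rho_1^*)=0$, together with the elementary inequality $p(\rho_1)\sin_\kappa^2(\rho_1)=\sin_\kappa^2(\rho_1)-m^2\ge(\xi^2-1)m^2$. A Sturm-type comparison of $L_{\kappa,m}$ on $[\rho_1,\rho_1^*]$ with an explicit solution of a constant-coefficient equation, in the same spirit as in Lemma~\ref{l:comp1}, is then used to control the integral and to prevent $\rho_1^*-\rho_1$ from being too large relative to the ``turning point'' $\sin_\kappa^{-1}(m)$. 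Once the correction is shown to be bounded by $2C/((\xi-1)m)$, the elementary estimate $\sqrt{1+x}\le 1+x$ converts the squared inequality into the announced $1+C/((\xi-1)m)$.
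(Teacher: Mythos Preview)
Your monotone-energy argument for the main comparison is correct and is a genuinely different route from the paper's. The paper instead compares $L_{\kappa,m}$ on $[\rho_1,\infty)$ with an explicit solution of $(\sin_\kappa y')'+m^2(\xi^2-1)y/\sin_\kappa=0$, which in the variable $s=\log\tan_\kappa(\rho/2)$ becomes $y_{ss}+m^2(\xi^2-1)y=0$; matching initial data at $\rho_1$ yields
\[
\max_{\rho\ge\rho_1}|L_{\kappa,m}(\rho)|\le\sqrt{C_1^2+C_2^2},\qquad C_1=L_{\kappa,m}(\rho_1),\quad C_2=\frac{L_{\kappa,m}'(\rho_1)\sin_\kappa(\rho_1)}{m\sqrt{\xi^2-1}}.
\]
Your bound $L_{\kappa,m}(\rho_1^*)^2\le E(\rho_1)$ has exactly the same shape, with $1/p(\rho_1)=\sin_\kappa^2(\rho_1)/(\sin_\kappa^2(\rho_1)-m^2)$ in place of $\sin_\kappa^2(\rho_1)/(m^2(\xi^2-1))$; since $\sin_\kappa^2(\rho_1)-m^2>(\xi^2-1)m^2$, your inequality is at least as sharp. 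So this step is fine and arguably cleaner than the explicit Sturm comparison.

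Where your sketch becomes unnecessarily heavy---and not quite complete---is the bound on $L_{\kappa,m}'(\rho_1)/L_{\kappa,m}(\rho_1)$. You propose integrating \eqref{eq:11} from $\rho_1$ to $\rho_1^*$ and then invoking a further Sturm-type comparison ``to prevent $\rho_1^*-\rho_1$ from being too large''. That last step is the quantitative heart of the matter and you leave it as a plan. The paper avoids it entirely with a one-line convexity argument on the \emph{other} side of $\rho_1$: on $(\rho_0,\rho_1)$ with $\rho_0=\sin_\kappa^{-1}(m)$ one has $L_{\kappa,m}>0$, $L_{\kappa,m}'>0$, $\cot_\kappa>0$ and $p>0$, so the equation $L''+\cot_\kappa L'+pL=0$ forces $L_{\kappa,m}''<0$. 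Taylor's formula then gives
\[
L_{\kappa,m}'(\rho_1)\le\frac{L_{\kappa,m}(\rho_1)-L_{\kappa,m}(\rho_0)}{\rho_1-\rho_0}<\frac{L_{\kappa,m}(\rho_1)}{\rho_1-\rho_0},
\]
and elementary monotonicity estimates on $\sin_\kappa^{-1}$ (using $\rho_1<R_{|\kappa|}$) yield $\rho_1-\rho_0\ge c(\xi-1)m$ for an absolute $c>0$. Feeding this directly into your energy inequality finishes the proof with no further comparison needed. Your integrated-equation route is not wrong in spirit, but as written it is both more involved and incomplete; replacing it by the concavity bound above makes your argument self-contained.
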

					
	\begin{proof}
		Let $\rho^*_1$ be the first local maximum of $L_{\kappa,m}$.
		By Proposition~\ref{cl:1} if $\rho_1>\rho^*_1$ then the left-hand side of \eqref{eq:comp2} is one and the statement becomes trivial.
							
		The rest of the proof  relies  on the comparison of $L_{\kappa,m}$ and  a solution of the equation 
		\begin{equation}
			\label{eq:diff2}
			(\sin_\kappa (\rho) y')'+\frac{m^2(\xi^2-1)}{\sin_\kappa (\rho)}y=0
		\end{equation}
		on the interval $(\rho_1,\infty)$. Solutions to \eqref{eq:diff2} are of the form
		\[y(\rho)=c_1\cos(\gamma\log(\tan_\kappa(\rho/2)))+c_2\sin(\gamma\log(\tan_\kappa(\rho/2))),\]
		where $\gamma^2=\xi^2-1$.
		Let $d=\gamma\log(\tan_\kappa(\rho_1/2))$ and choose a solution $y$ of \eqref{eq:diff2} on the form
		\[y(\rho)=C_1\cos(\gamma\log(\tan_\kappa(\rho/2))-d)+C_2\sin(\gamma\log(\tan_\kappa(\rho/2))-d),\]
		with initial data $y(\rho_1)=L_{\kappa,m}(\rho_1)$ and $y'(\rho_1)=L'_{\kappa,m}(\rho_1)$. This gives the values
                \[C_1=L_{\kappa,m}(\rho_1),\quad C_2=\frac{L'_{\kappa,m}(\rho_1)\sin_\kappa(\rho_1)}{\gamma}.\]
		Applying the comparison theorem, we get
		\[L_{\kappa, m}(\rho_2)\le C_1\cos(\gamma\log(\tan_\kappa(\rho_2/2))-d)+C_2\sin(\gamma\log(\tan_\kappa(\rho_2/2))-d).\]
						
		Since $\sin$ and $\cos$ are bounded by $1$, we estimate $C_2/L_{\kappa,m}(\rho_1)$ from  above to prove \eqref{eq:comp2}. In order to  estimate $C_2$, we see that the assumption $\rho_1 < \rho_{1}^{*}$ implies  $L_{\kappa,m}(\rho) > 0$ and $L_{\kappa,m}'(\rho) > 0$ on the interval $(0, \rho_1)$. Equation \eqref{eq:11} shows that $L_{\kappa,m}''(\rho)<0$ when $\rho\in(\rho_0,\rho_1)$, where $\rho_0=\sin_\kappa^{-1}(m)$. Then the Taylor formula gives
		\[L_{\kappa,m}(\rho_0)-L_{\kappa,m}(\rho_1)+(\rho_1-\rho_0)L'_{\kappa,m}(\rho_1)<0.\]
		Consequently,
		\[L_{\kappa,m}'(\rho_1)<\frac{L_{\kappa,m}(\rho_1)-L_{\kappa,m}(\rho_0)}{\rho_1-\rho_0}<\frac{L_{\kappa,m}(\rho_1)}{\rho_1-\rho_0}.\]

			For $K>0$ the inequality 
			\[x_1-x_0>\sin x_1-\sin x_0\quad\text{when} \quad x_1>x_0\]
			 implies that $\rho_1-\rho_0>(\xi-1)m$. 
			 For $K<0$, we note that 
			\[x_1-c\sinh x_1>x_0-c\sinh x_0\quad \text{when}\quad x_1>x_0,\] 
			if $\cosh x_1<c^{-1}$. Using the assumption $\rho_1<R_{|\kappa|}$, we conclude that $\rho_1-\rho_0>c(\xi-1)m$, where $c=(\cosh \pi/2)^{-1}$. 
			Finally, we obtain 
			\[\max_{\rho\ge \rho_1}|L_{\kappa,m}(\rho)|\le \sqrt{C_1^2+C_2^2}\le L_{\kappa,m}(\rho_1)\left(1+\frac{C}{(\xi-1)m}\right).\qedhere\]
		\end{proof}
					
		Now we are ready to prove that the coefficient in the three ball theorem grows exponentially in $rk$ if we restrict ourselves to balls with sufficiently small radius $r$.
					
		\begin{theorem}
			Let $(M, \mathbf{g})$ be either a hyperbolic plane or a sphere and denote its curvature by $K$. Suppose that for some $\alpha \in \left( 0,1 \right)$ there exists a constant $C_\alpha(k,r,K)$ such that for any solution $u_k$ to the Helmholtz equation \eqref{hh}
			the following inequality holds
			\[M_{u_{k}}\left( 2r \right)\le C_\alpha(k,r,K)M_{u_{k}}\left( r \right)^\alpha M_{u_{k}}\left( 4r \right)^{1-\alpha},\quad 0 < r <\frac{\pi}{8\sqrt{|K|}}.\]
			Then
			\begin{equation}
				\label{eq:main-alpha}
				C_\alpha(k,r,K)\ge c_1^\alpha e^{c_2\alpha kr},
			\end{equation}
			where $c_1$ and $c_2$ only depend on $K$.
		\end{theorem}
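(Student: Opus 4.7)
The plan is to imitate the Euclidean proof of Theorem 2.2, with the Bessel function $J_m$ replaced by $L_{\kappa,m}$ and the asymptotics of its first zero replaced by the comparison results proved above. The test solutions are $u_k(r,\theta)=L_{\kappa,m}(kr)\sin(m\theta)$, so that $M_{u_k}(R)=\max_{\rho\le kR}|L_{\kappa,m}(\rho)|$, and the goal is to choose $m$ so that $M_{u_k}(r)$ sits in the strictly increasing regime of $L_{\kappa,m}$ while $M_{u_k}(2r)$ and $M_{u_k}(4r)$ essentially coincide with the global maximum.

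Fix constants $0<\delta<1<\xi$ with $\xi/\delta$ strictly less than $\inf\sin_\kappa(2kr)/\sin_\kappa(kr)$, with extra slack so that $\delta m$ can be made larger than $\sin_\kappa(kr)$ by a multiplicative constant; for instance $\delta=9/10$, $\xi=11/10$ works, since $\sin_\kappa(2kr)/\sin_\kappa(kr)$ equals $2\cos(\sqrt{K}\,r)>2\cos(\pi/8)$ in the spherical case and $2\cosh(\sqrt{-K}\,r)>2$ in the hyperbolic case, once the assumption $r<\pi/(8\sqrt{|K|})$ is used. Then for $kr$ past some absolute threshold $T_0$, the interval $\bigl(\sin_\kappa(kr)/\delta,\ \sin_\kappa(2kr)/\xi\bigr)$ has length at least $1$ and I take $m$ to be its largest integer; $m$ is of order $kr$ and $\delta m/\sin_\kappa(kr)$ is bounded below by an absolute constant strictly greater than $1$.

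For such an $m$ I would run three steps. (i) Since $\sin_\kappa(kr)<\delta m<m\le\sin_\kappa(\rho^*_1)$, Proposition~\ref{cl:1} forces $kr<\rho^*_1$, so $L_{\kappa,m}$ is positive and increasing on $[0,kr]$ and $M_{u_k}(r)=L_{\kappa,m}(kr)$. (ii) Since $\sin_\kappa(2kr)>\xi m$, Lemma~\ref{lem:up} gives
\[M_{u_k}(4r)\le\max_\rho|L_{\kappa,m}(\rho)|\le\Bigl(1+\tfrac{C}{(\xi-1)m}\Bigr)M_{u_k}(2r)\le 2M_{u_k}(2r)\]
once $m$ is past an absolute threshold, so the three ball inequality reduces to
\[\bigl(M_{u_k}(2r)/M_{u_k}(r)\bigr)^{\alpha}\le 2^{1-\alpha}C_\alpha(k,r,K).\]
(iii) Set $\rho_2=\sin_\kappa^{-1}(\delta m)<2kr$; then $\rho_2<\rho^*_1$ and $M_{u_k}(2r)\ge L_{\kappa,m}(\rho_2)$. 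Lemma~\ref{l:comp1} with $\rho_1=kr$ bounds $L_{\kappa,m}(\rho_2)/L_{\kappa,m}(kr)$ from below by $\tfrac14 T^{\beta m}$ (for $m$ large, the inverse term being absorbed), where $T=\tan_\kappa(\rho_2/2)/\tan_\kappa(kr/2)$ and $\beta=\sqrt{1-\delta^2}$. The elementary inequalities $b\tan x\ge\tan(bx)$ and the hyperbolic analogue used in the proofs of the corollaries to Lemma~\ref{l:comp1} give $T\ge\rho_2/(A\,kr)$ with $A=1$ for $K>0$ and $A\le\sinh(\pi/2)/(\pi/2)$ for $K<0$; by the choice of $m$ this is at least an absolute constant $C_0>1$. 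Combining, $C_\alpha(k,r,K)\ge c_1^{\alpha}e^{c_2\alpha kr}$ whenever $kr>T_0$.

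The regime $kr\le T_0$ is handled by the radial test $u_k(r,\theta)=L_{\kappa,0}(kr)$: Remark~\ref{re:m_0} gives $M_{u_k}(R)=L_{\kappa,0}(0)$ for all such $R$, so all three maxima coincide and $C_\alpha(k,r,K)\ge 1$; shrinking $c_1$ so that $c_1^\alpha e^{c_2\alpha kr}\le 1$ on $[0,T_0]$ yields \eqref{eq:main-alpha} on all scales. The main technical delicacy is the interlock of the two comparison lemmas: Lemma~\ref{l:comp1} demands that $\rho_2$ sit strictly \emph{before} the first maximum of $L_{\kappa,m}$ while Lemma~\ref{lem:up} demands that $2kr$ sit strictly \emph{after} it, and the hypothesis $r<\pi/(8\sqrt{|K|})$ is precisely what keeps these two requirements simultaneously satisfiable for both signs of $K$.
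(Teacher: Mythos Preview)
Your proof is correct and follows essentially the same strategy as the paper: both use the test solutions $L_{\kappa,m}(kr)\sin(m\theta)$, invoke Proposition~\ref{cl:1} and Lemma~\ref{lem:up} to make $M_{u_k}(4r)/M_{u_k}(2r)$ bounded, Lemma~\ref{l:comp1} (through its corollaries) to obtain the exponential gap between $M_{u_k}(r)$ and $M_{u_k}(2r)$, and the $m=0$ solution together with Remark~\ref{re:m_0} to cover the regime of small $kr$. The only difference is bookkeeping: the paper treats $K>0$ and $K<0$ separately with explicit numerical choices of $m$ and takes $\rho_2=\mu\,kr$ for a concrete $\mu$, whereas you give a unified argument by selecting $m$ through $\sin_\kappa$ and setting $\rho_2=\sin_\kappa^{-1}(\delta m)$.
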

					
		\begin{proof} Consider the family of functions
			\[u_{k,m}(r,\theta)=L_{\kappa,m}(kr)\sin\left( m\theta \right),\]
			where $m$ is a non-negative integer. By construction, $u_{k,m}$ solves the Helmholtz equation. Thus for any $m$ we have the inequality
			\[C_\alpha(k,r,K)\ge \left(\frac{M_m(2kr)}{M_m(kr)}\right)^\alpha\left(\frac{M_m(2kr)}{M_m(4kr)}\right)^{1-\alpha},\]
			where 
			\[M_m(\rho)=\max_{x\le \rho}|L_{\kappa,m}(x)|.\]
			Note that choosing $m=0$ gives $C_\alpha(k,r,K)\ge 1$ by Remark~\ref{re:m_0}. Thus if we assume that $kr<C_1$ for some constant $C_1$, we may choose $c_2$ and $c_1$ small enough such that the inequality holds.
							
			Assume first that $K<0$ so that $(M,\mathbf{g})$ is the hyperbolic plane. If $kr>C_1$ we choose a positive integer $m$ such that $10m<18kr<11m$. We apply \eqref{eq:comp1-} with $\rho_1=kr$ and $\rho_2=\mu kr<2kr$, where $\mu=19/17$. Then $\rho_2<2/3m\delta$ with $\delta<1$. We obtain
			\[
				\frac{M_m(2kr)}{M_m(kr)}\ge\frac{L_{\kappa,m}(\rho_2)}{L_{\kappa,m}(\rho_1)}\ge \frac{1}{2}\left[\left(\frac{\rho_2}{A\rho_1}\right)^{\beta m}-\left(\frac{\rho_2}{A\rho_1}\right)^{-\beta m}\right],
			\]
			where $\beta = \sqrt{1 - \delta^2}$ and \[A=\sin_\kappa\left( \rho_2 \right)/\rho_2<\sin_\kappa\left( 2kr \right)/(2kr)<4\sinh(\pi/4)/\pi<10/9.\]
			Therefore $q=\mu/A>1$ is an absolute constant and we have
			\begin{equation}\label{eq:Mratio}
				\frac{M_m(2kr)}{M_m(kr)}\ge \frac{1}{2}(q^{\beta m}-q^{-\beta m}).
			\end{equation}
			Thus there are $c_1>0$ and $c_2>0$ such that 
			\[M_m(2kr)\ge c_1\exp(c_2m)M_m(kr).\]
			On the other hand, we have $2kr>\xi m$ for $\xi =10/9$. Applying \eqref{eq:comp2} we get
			\[\frac{M_m(4r)}{M_m(2r)}=\frac{\max_{\rho\le 4kr}|L_{\kappa,m}(\rho)|}{\max_{\rho\le 2kr}|L_{\kappa,m}(\rho)|}\le 1+ C/m\le C_0,\]
			where $C_0$ is an absolute constant. Note also that $m\gtrsim kr$. Then \eqref{eq:main-alpha} follows for negative curvature. 							

			Assume now that $K > 0$ so that $(M,\mathbf{g})$ is a sphere. If $kr>C_1$ we choose $m$ to be a positive integer such that $10m<12 kr<11m$. We first let $\rho_1=kr$ and $\rho_2=13kr/12$ and apply \eqref{eq:comp1+} with $\delta=143/144$. Thus \eqref{eq:Mratio} follows whenever $kr > C_1$. Using \eqref{eq:comp2} with $\rho_1=2kr$ we need to check that $2\rho_1>\xi\pi m$ for some $\xi>1$. Note that $2\rho_1=4kr>10/3m$ and choose $\xi<\frac{10}{3\pi}$. Then \eqref{eq:main-alpha} follows for positive curvature.
		\end{proof}

	% ###############################################################################################################

\section{The reverse three ball inequality}\label{sec:3}
		
The question of stability of the solution to the Cauchy problem for the Helmholtz equation and the dependence of the estimates on the wave number $k$ was studied by many authors, see e.g.~\cite{HI04, SI07, IK11, BNO19}. We include a special case of the results adapted to the case of Riemannian manifolds  to demonstrate the difference between the usual three ball theorem and the reverse one.
\subsection{Reverse Inequality}
	
	 Let $(M,\mathbf{g})$ be a Riemannian manifold with sectional curvature satisfying \[\kappa \mathbf{g}(X,X)\le \sec(X,X)\le K \mathbf{g}(X,X).\] We denote by $\grad_M$ and $\Delta_M$ the gradient and Laplace operators on functions on $M$.  Let $B$ be a geodesic ball with diameter strictly less than the injectivity radius of $(M,\mathbf{g})$. Additionally, in the case that $K>0$ we assume that the diameter of $B$ is strictly less than $\frac{\pi}{2\sqrt{K}}$. 
	 
	 \begin{theorem}\label{th:last} Let $u_k$ solve the Helmholtz equation $\Delta_M u_k+k^2u_k=0$ in $B=B(p, R)$ and let $r<R_1<R$. There exists $C=C(r,R_1)$ such that
	 		\begin{equation}\label{eq:revL}
	 	\int_{B(p, r)}u_k^2\,\mathrm{dvol}\le C(r,R_1)\int_{B(p, R_1)\setminus B(p, r)} u_k^2\,\mathrm{dvol},
	 	\end{equation}
	 \end{theorem}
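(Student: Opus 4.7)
My plan is to deduce the $L^2$-estimate from its $H^1$-analogue with $k$-independent constant, as suggested by the introduction. First, for radii $r < r_1 < R_1' < R_1$ and $A_0 = B(p, R_1') \setminus B(p, r_1)$, I would establish the $H^1$-inequality
\[
\int_{B(p, r)} \bigl(u_k^2 + |\nabla u_k|^2\bigr)\,\mathrm{dvol} \le C_1\int_{A_0} \bigl(u_k^2 + |\nabla u_k|^2\bigr)\,\mathrm{dvol},
\]
with constant $C_1$ depending only on the radii; then I would pass to the $L^2$ statement using a Caccioppoli bound on a buffer annulus.

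For the $H^1$ inequality I would invoke a Carleman estimate for $\Delta_M + k^2$ on $B(p, R_1)$ of the form (see \cite{I17, BNO19})
\[
\tau\int e^{2\tau \phi} v^2\,\mathrm{dvol} + \tau^{-1}\int e^{2\tau \phi} |\nabla v|^2\,\mathrm{dvol} \le C_0\, \tau^{-1}\int e^{2\tau \phi} |(\Delta_M + k^2)v|^2\, \mathrm{dvol},
\]
valid for all $v \in C_c^\infty(B(p, R_1))$ and $\tau \ge \tau_0$, with $C_0, \tau_0$ independent of $k$; here $\phi = \phi(d(\cdot, p))$ is a suitably strictly decreasing, pseudo-convex radial weight. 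Apply the estimate to $v = \chi u_k$ where $\chi \in C_c^\infty(B(p, R_1'))$ equals $1$ on $B(p, r_1)$; the commutator term $(\Delta_M + k^2)(\chi u_k) = 2\nabla\chi \cdot \nabla u_k + u_k\, \Delta_M \chi$ is supported in $A_0$. Since $\phi$ is strictly decreasing, $e^{2\tau \phi}$ is larger on $B(p, r)$ than on $A_0$; choosing $\tau$ large enough to swallow the cutoff constants and then dropping the exponential weights gives the desired $H^1$ inequality.

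For the descent, take a cutoff $\eta \in C_c^\infty(A)$, with $A = B(p, R_1) \setminus B(p, r)$ and $\eta \equiv 1$ on $A_0$. Testing Helmholtz against $\eta^2 u_k$ and integrating by parts gives the Caccioppoli bound
\[
\int_{A_0} |\nabla u_k|^2\, \mathrm{dvol} \le C_\eta (k^2 + 1) \int_A u_k^2\, \mathrm{dvol}.
\]
Combined naively with the $H^1$ inequality this gives $\int_{B(p, r)} u_k^2 \le C(k^2 + 1)\int_A u_k^2$, which still carries an unwanted factor of $k^2$. To remove it, I would strengthen the $H^1$ step by choosing the Carleman parameter $\tau$ proportional to $k$, upgrading to the semiclassical $H^1_k$-estimate
\[
\int_{B(p, r)}\bigl(k^2 u_k^2 + |\nabla u_k|^2\bigr)\,\mathrm{dvol} \le C_1\int_{A_0}\bigl(k^2 u_k^2 + |\nabla u_k|^2\bigr)\,\mathrm{dvol},
\]
still with $k$-uniform constant. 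Combining with the Caccioppoli bound and dividing by $k^2$ yields the target inequality for $k \ge 1$; the regime $k \le 1$ is a bounded perturbation of the Laplacian and follows from the classical reverse three ball inequality for harmonic-type functions. The main technical obstacle is the first step: constructing a Carleman weight $\phi$ that is pseudo-convex for $\Delta_M + k^2$ uniformly in $k$ on the geodesic ball $B(p, R_1)$, using the curvature bounds on $M$. Once such a weight is in place, the cutoff-plus-Caccioppoli descent is routine.
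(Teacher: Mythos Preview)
Your architecture---Carleman estimate $\Rightarrow$ $H^1$ reverse inequality with $k$-independent constant $\Rightarrow$ Caccioppoli descent to $L^2$---is exactly the paper's. The execution diverges at two linked points, and the first is a genuine gap.

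\textbf{The weight.} The $k$-uniform Carleman estimate from \cite{BNO19} that the paper invokes requires $\phi$ to be \emph{strictly convex} (positive definite Hessian); this is what makes the Hessian terms in the pointwise inequality coercive independently of $k$. A weight that is strictly decreasing in $d(\cdot,p)$ cannot be convex on $B(p,R_1)$, so your proposed radial $\phi$ does not fit the cited estimate, and mere pseudo-convexity for the principal symbol is not known to give $k$-uniformity. The paper instead takes $\phi(y)=\dist(x,y)^2$ for a point $x$ \emph{outside} $B(p,R)$; on $B$ this $\phi$ is smooth, strictly convex, and critical-point free. Fixing $t>t_0$ and bounding $e^{2t\phi}$ crudely by its max and min on $B$ yields the coercivity
\[
\int_B|\Delta_M w+k^2w|^2\,\mathrm{dvol}\ge c_0\int_B\bigl(|w|^2+|\nabla w|^2\bigr)\,\mathrm{dvol},\qquad w\in C^2_0(B),
\]
with $c_0$ independent of $k$. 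No monotonicity of $\phi$ along spheres about $p$ is used---or available, since the level sets of $\phi$ are spheres about $x$, not $p$.

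\textbf{Removing the $k^2$.} For the same reason your $\tau\sim k$ trick is unavailable with this weight: $\min_{B(p,r)}\phi<\max_{A_0}\phi$ in general, so letting $\tau\to\infty$ sends the comparison the wrong way. The paper instead uses Caccioppoli \emph{in both directions}. Besides the upper bound on the annulus that you wrote, it uses the lower bound on the inner ball,
\[
k^2\int_{B(p,r-\varepsilon)}u_k^2\,\mathrm{dvol}\ \le\ \int_{B(p,r)}|\nabla u_k|^2\,\mathrm{dvol}+C\varepsilon^{-2}\int_{B(p,r)}u_k^2\,\mathrm{dvol},
\]
which extracts the factor $k^2$ on the left directly from the $H^1$ reverse inequality. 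Combining the two gives
\[
k^2\int_{B_{r_1}}u_k^2\le(k^2+C\varepsilon^{-2})\int_{B_{R_1}\setminus B_{r_1}}u_k^2+C\varepsilon^{-2}\int_{B_{r_1}}u_k^2,
\]
and for $k>C\varepsilon^{-1}$ the last term is absorbed; for bounded $k$ one uses the $H^1$ inequality plus the upper Caccioppoli bound, as you indicate. So the missing ingredient in your descent step is the \emph{lower} Caccioppoli inequality, which replaces your semiclassical rescaling and is compatible with the off-center convex weight.
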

	 The result is very closed to a particular case of the result in \cite{BNO19}, we sketch the proof for the convenience of the reader.
	 
	 We say that a function $\phi:B\to\mathbb{R}$ is strictly convex if its Hessian is positive definite. We choose a point $x$ such that $x\not\in B(p,R)$ but $R+\dist(x,p)$ is strictly less than the injectivity radius and than $\frac{\pi}{2\sqrt{K}}$ for the case $K>0$, and consider $\phi(y)=\dist(x,y)^2$. This function is smooth on $B$ since the metric on the Riemannian manifold is assumed to be smooth and $x\not\in B$ while $B$ is contained in the ball of the injectivity radius around $x$. Moreover,  $\Hess(\phi)$ is (uniformly)  positive definite on $B$ and $\phi$ has no critical points, see \cite[Theorem 6.4.8]{pe16} and the preceding discussions. By repeating the computations of \cite[Lemma 1]{BNO19}, where it is also pointed out that the result holds on Riemannian manifolds, we obtain the following point-wise inequality. Let $w\in C^2(B)$ and let $v=e^{t\phi}w$,  then
	 \begin{multline*} e^{2t\phi}(\Delta_M w+k^2w)^2\ge 2\dv(b\grad_M v+a)+4t\langle \Hess_M(\phi)\grad_M v,\grad_M v\rangle_M\\
	 +4t^3\langle \Hess_M\phi\grad_M\phi,\grad_M \phi\rangle_M v^2+t\langle\grad_M\Delta_M\phi,\grad_M v\rangle_M v,
	 \end{multline*} 
	where $b=-tv\Delta_M\phi-2t\langle \grad_M v,\grad_M \phi\rangle_M$ and \[a=t(|\grad _M v|_M^2-(k^2+t^2|\grad_M \phi|_M^2)v^2)\grad_M\phi.\]	
		\begin{lemma}\label{lem:com}
                   Let $(M,\mathbf{g})$  and $B$ be as above. Then there exists a constant $ c_0>0$ such that for any function $w\in C^2_0(B)$ and $k\ge 0$ the following inequality holds
			\begin{equation}\label{coer}
			\int_B |\Delta_M w+k^2 w|^2\,\mathrm{dvol}\ge c_0\int_B|w|^2+|\grad_M w|_M^2\,\mathrm{dvol}.
			\end{equation}
		\end{lemma}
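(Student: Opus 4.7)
The plan is to integrate the pointwise Carleman inequality displayed just before the lemma over $B$, absorb the indefinite first-order term via Young's inequality, and finally pass from $v=e^{t\phi}w$ back to $w$ by a second Young estimate, fixing $t$ once and for all.

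Since $w\in C_0^2(B)$, so is $v$, and the divergence term $2\dv(b\grad_M v+a)$ integrates to zero. Because $\bar B$ is compact, $\phi$ is strictly convex there, and $\phi$ has no critical points, there exist constants $c_\phi,\gamma>0$ with $\Hess_M\phi\ge c_\phi\,\mathbf{g}$ and $|\grad_M\phi|_M\ge\gamma$ pointwise on $\bar B$. The pointwise inequality then gives
\[\int_B e^{2t\phi}|\Delta_M w+k^2 w|^2\,\mathrm{dvol}\ge 4tc_\phi\int_B|\grad_M v|_M^2+4t^3 c_\phi\gamma^2\int_B v^2+t\int_B\langle\grad_M\Delta_M\phi,\grad_M v\rangle_M v.\]
With $L=\sup_{\bar B}|\grad_M\Delta_M\phi|_M$, Cauchy--Schwarz and Young bound the last term in absolute value by $2tc_\phi\int_B|\grad_M v|_M^2+(tL^2/(8c_\phi))\int_B v^2$. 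Choosing $t=t_0$ large enough that $4t_0^3 c_\phi\gamma^2-t_0L^2/(8c_\phi)\ge 2t_0^3 c_\phi\gamma^2$---possible because the good term is cubic and the bad one linear in $t$---yields
\[\int_B e^{2t_0\phi}|\Delta_M w+k^2 w|^2\,\mathrm{dvol}\ge 2t_0 c_\phi\int_B|\grad_M v|_M^2+2t_0^3 c_\phi\gamma^2\int_B v^2.\]

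For the passage back to $w$, expand $|\grad_M v|_M^2=e^{2t_0\phi}|\grad_M w+t_0 w\grad_M\phi|_M^2$ and apply Young with a parameter $\epsilon\in(0,1)$ to obtain
\[|\grad_M v|_M^2\ge(1-\epsilon)e^{2t_0\phi}|\grad_M w|_M^2-\frac{1-\epsilon}{\epsilon}\,t_0^2 e^{2t_0\phi}w^2|\grad_M\phi|_M^2.\]
Choosing $\epsilon$ so that $(1-\epsilon)/\epsilon=\gamma^2/(2\sup_{\bar B}|\grad_M\phi|_M^2)$---a fixed value independent of $t_0$---makes the resulting negative $w^2$ contribution bounded by half of the $2t_0^3 c_\phi\gamma^2\int_B e^{2t_0\phi}w^2$ term. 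Since $\phi$ is bounded on $\bar B$, the weight $e^{2t_0\phi}$ is squeezed between two positive constants and may be stripped from both sides, delivering \eqref{coer} with a $k$-independent $c_0$ of the form
\[c_0 = e^{-2t_0(\max_{\bar B}\phi-\min_{\bar B}\phi)}\min\!\Bigl(\tfrac{2t_0 c_\phi\gamma^2}{2\sup_{\bar B}|\grad_M\phi|_M^2+\gamma^2},\,t_0^3 c_\phi\gamma^2\Bigr).\]

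The principal technical point is making the two absorptions compatible: the indefinite Carleman cross term grows linearly in $t$, whereas the $|\grad_M v|_M^2\to|\grad_M w|_M^2$ conversion produces a quadratic-in-$t$ error acting on $\int_B w^2$, and both must be dominated by the cubic-in-$t$ gain $\int_B v^2$. The cubic margin makes this routine for $t_0$ sufficiently large, the only cost being the $k$-independent exponential factor absorbed into $c_0$.
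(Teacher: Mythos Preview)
Your proof is correct and follows essentially the same route as the paper: integrate the pointwise Carleman identity, drop the divergence term, use the uniform convexity of $\phi$ to bound the Hessian terms below, absorb the $\grad_M\Delta_M\phi$ cross term via Young, convert from $v$ to $w$ with a second Young estimate, and strip the bounded weight $e^{2t_0\phi}$. The paper compresses the last two steps into the sentence ``Combining these inequalities, we get\ldots'', whereas you track the constants explicitly; but the argument is the same.
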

		
		\begin{proof}
			We repeat the argument given in \cite[Corollary 1]{BNO19}. Integrating the last inequality over a ball $B$ and taking into account that functions $a$ and $b$ have compact supports in $B$, we conclude that the divergence term disappears. For the next two terms, which contain the Hessian of $\phi$,  we use the convexity inequality
			\[\langle \Hess_M \phi\grad_M f,\grad_M f\rangle_M\ge c_\phi |\grad_M f|^2_M\]
			and the computation 
			\[\grad_M v=e^{t\phi}(\grad_M w+tw\grad_M \phi).\]
			Finally, the last term is estimated as
			\[\left|t\langle\grad_M\Delta_M\phi,\grad_M v\rangle_M v\right|\le \varepsilon t|\grad_M v|_M^2+\varepsilon^{-1}t|\grad_M\Delta_M \phi|_M^2v^2.\]
			Combining these inequalities, we get
			\[\int_B|\Delta_M w+k^2w|^2e^{2t\phi}\,\mathrm{dvol}\ge c_1\int_B\left(t^3|w|^2e^{2t\phi}+t|\grad w|^2e^{2t\phi}\right)\,\mathrm{dvol},\]
			when $t>t_0$. The powerful feature  of the last inequality is that $c_1$ and $t_0$ do not depend on $k$ (but depend on $\phi$ which we fix).  Finally, we fix some $t>t_0$ and let $M=\max_B e^{2t\phi}$ and $m=\min_Be^{2t\phi}$. Then \eqref{coer} holds with $c_0=c_1m\min\{t^3,t\}M^{-1}$.
		\end{proof}

				Suppose now that $u_k$ is a solution of the Helmholtz equation \eqref{hh} in a ball $B9p,R)$ that satisfies the conditions in Theorem~\ref{th:last}. We apply inequality \eqref{coer} to $w=u_k\chi$, where $\chi\in C^2_0(B)$ is compactly supported on $B$ and equals to one on a smaller ball $B_1\subset\subset B$. This gives the inequality
		\[\int_{B_1}\!\!|u_k|^2+|\grad u_k|^2\,\mathrm{dvol}\le\frac{1}{C_0}\int_{B\setminus B_1}\!\!\!|u_k\Delta_M \chi+2\grad u_k\cdot\grad \chi|^2\,\mathrm{dvol}.\]
		The last inequality implies that for any $r<R$ such that $B_R=B(x,R)$ and $B_r=B(x,r)$ are geodesic balls satisfying the conditions in Lemma~\ref{lem:com}, there is a constant $C_2(r,R)$ such that 
		\begin{multline}\label{rev}
			\int_{B_r}|u_k|^2+|\grad u_k|^2\,\mathrm{dvol}\le \\
			C_2(r,R)\int_{B_R\setminus B_r}|u_k|^2+|\grad u_k|^2\,\mathrm{dvol}.
		\end{multline}
		Inequality \eqref{rev} shows that if $u_k^2+|\grad u_k|^2$ is small on the annulus $B_R\setminus B_r$, then it is small on the whole ball $B_R$. For the Euclidean space an alternative proof can be obtained by decomposing a solution $u_k$ into series of products of Bessel functions and spherical harmonics. From this, one can deduce \eqref{rev} from the Debye asymptotic of the Bessel functions.
	
		To compare with the previous section and finish the proof Theorem \ref{th:last}, we can also use Caccioppoli's inequality to control the Sobolev norm of $u_k$ by its $L^2$-norm.
		\begin{lemma}[Caccioppoli's inequality]\label{Ca:ieq}
			Let $\varepsilon>0$ and let $R=R(M)$ be small enough. Furthermore, let $\varepsilon<r<R-2\varepsilon$. We  denote \[\Omega=B\left( x,R \right)\setminus B\left(x,r \right)\quad{\text{ and}}\] 
			\[\Omega_+=B(x, R+\varepsilon)\setminus B(x, r-\varepsilon),\quad \Omega_-=B(x,R-\varepsilon)\setminus B(x, r+\varepsilon).\]
			Assume that $u_k\in C^{2}\left( \Omega \right)$ and $\Delta_M u_k+ k^2u_k=0$ in $\Omega_+$. Then there exists a constant $C=C(M)$ such that
			\begin{align*}
			k^2 \int_{\Omega_-}u_k^{2}\,\mathrm{dvol}-\frac{C}{\varepsilon^{2}}\int_{\Omega}u_k^2\,\mathrm{dvol}&\le\int_{\Omega}\left|\grad u_k\right|^{2}\,\mathrm{dvol}\\
			&\le \left(k^2+\frac{C}{\varepsilon^{2}}\right) \int_{\Omega_+}u_k^{2}\,\mathrm{dvol}.
			\end{align*}
		\end{lemma}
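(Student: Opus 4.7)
The plan is to run the standard Caccioppoli argument: multiply the Helmholtz equation $\Delta_M u_k + k^2 u_k = 0$ by $\chi^2 u_k$ for a suitable smooth cutoff $\chi$ and integrate by parts on $M$. Since $\chi$ will have compact support inside $\Omega_+$ where $u_k$ solves the equation, the boundary terms vanish and one obtains the key identity
\[
\int_M \chi^2 |\grad u_k|_M^2 \, \mathrm{dvol} = k^2 \int_M \chi^2 u_k^2 \, \mathrm{dvol} - 2\int_M \chi u_k \langle \grad \chi, \grad u_k\rangle_M \, \mathrm{dvol}.
\]
All subsequent steps are estimates of the cross term by Young's inequality $2|ab| \le \delta a^2 + \delta^{-1} b^2$ with $a = \chi |\grad u_k|_M$ and $b = u_k |\grad \chi|_M$, plus the support properties of $\chi$.

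For the upper bound I would pick $\chi \in C^2_c(\Omega_+)$ with $\chi \equiv 1$ on $\Omega$ and $|\grad \chi|_M \le c_0/\varepsilon$ (possible since the annuli are separated by $\varepsilon$). Taking $\delta$ small and absorbing the $\delta \int \chi^2 |\grad u_k|^2$ term into the left-hand side of the identity yields $\int_{\Omega} |\grad u_k|_M^2 \, \mathrm{dvol} \le (k^2 + C/\varepsilon^2) \int_{\Omega_+} u_k^2 \, \mathrm{dvol}$ with $C$ depending only on $M$. For the lower bound I would pick $\chi \in C^2_c(\Omega)$ with $\chi \equiv 1$ on $\Omega_-$ and the same gradient control; the identity gives $k^2 \int \chi^2 u_k^2 \le (1+\delta) \int_\Omega |\grad u_k|_M^2 + \delta^{-1} c_0^2 \varepsilon^{-2} \int_\Omega u_k^2$, and using $\chi \equiv 1$ on $\Omega_-$ and $\chi^2 \le \mathbf{1}_\Omega$ produces the claimed lower bound.

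The main obstacle is constant bookkeeping in the lower bound: a naive application of Young's inequality gives a factor $(1+\delta)^{-1}$ in front of $k^2 \int_{\Omega_-} u_k^2$ rather than the clean $k^2$ written in the statement. One removes this spurious factor by absorbing the difference $\frac{\delta}{1+\delta} k^2 \int_{\Omega_-} u_k^2$ into the error term $C \varepsilon^{-2} \int_\Omega u_k^2$, which is legitimate since the lemma is applied in a regime where $R = R(M)$ (and hence $\varepsilon$) is small, so that the relevant constants depend only on $M$. Aside from this, the proof is a routine application of Green's theorem on $M$, Young's inequality, and the construction of a standard cutoff with $|\grad \chi|_M \lesssim 1/\varepsilon$, all justified by the smoothness of the metric inside the injectivity radius (and by the restriction diameter $< \pi/(2\sqrt{K})$ when $K > 0$, which is inherited from Theorem~\ref{th:last}).
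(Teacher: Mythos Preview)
Your overall strategy (cutoff times $u_k$, integrate by parts, bound the cross term) is the same as the paper's, but there is a genuine gap in your constant bookkeeping. The lemma asserts $C=C(M)$ \emph{independent of $k$}; indeed, in the paragraph following the lemma it is applied precisely in the regime $k>C\varepsilon^{-1}$. Your proposed fix, absorbing $\frac{\delta}{1+\delta}k^{2}\int_{\Omega_-}u_k^{2}$ into $C\varepsilon^{-2}\int_{\Omega}u_k^{2}$, requires $k^{2}\lesssim\varepsilon^{-2}$ and therefore fails. The same defect appears in your upper bound, which yields $(1-\delta)^{-1}k^{2}$ rather than $k^{2}$.

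The paper avoids this loss by a slightly different identity: it multiplies by $\varphi u_k$ with a \emph{linear} cutoff $\varphi$ (not $\chi^{2}$), additionally requiring $|\Delta_M\varphi|\le C\varepsilon^{-2}$, and then integrates the cross term by parts once more,
\[
\int u_k\langle\grad_M u_k,\grad_M\varphi\rangle_M=\tfrac12\int\langle\grad_M u_k^{2},\grad_M\varphi\rangle_M=-\tfrac12\int u_k^{2}\,\Delta_M\varphi,
\]
obtaining the exact identity
\[
k^{2}\int\varphi\,u_k^{2}=\int\varphi\,|\grad_M u_k|^{2}-\tfrac12\int u_k^{2}\,\Delta_M\varphi.
\]
Now $\bigl|\frac12\int u_k^{2}\Delta_M\varphi\bigr|\le C\varepsilon^{-2}\int u_k^{2}$ with no Young-type loss, and both inequalities follow with the clean coefficient $k^{2}$. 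Your argument is easily repaired the same way: the cross term $2\int\chi u_k\langle\grad_M\chi,\grad_M u_k\rangle_M$ equals $-\tfrac12\int u_k^{2}\,\Delta_M(\chi^{2})$, and $|\Delta_M(\chi^{2})|\lesssim\varepsilon^{-2}$.
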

		\begin{proof}
			There exists a smooth function  $\varphi_+$ with compact support in $\Omega_+$ that satisfy $\varphi_+=1$ on $\Omega$ and $\left|\grad \varphi_+\right|\le \frac{C}{\varepsilon}$ and $|\Delta_M \varphi_+|\le \frac{C}{\varepsilon^{2}}$. Then,  using the divergence theorem, we have 
			\begin{align*}
			k^2\int_{\Omega_+}\varphi _+u_k^2\,\mathrm{dvol}&=-\int_{\Omega_+}\varphi_+ u_k\Delta_M u_k\,\mathrm{dvol}\\  
			&=\int_{\Omega_+}\langle \grad u_k,\grad \left( \varphi_+ u_k \right)\rangle \,\mathrm{dvol}                         \\
				& =\int_{\Omega_+}\varphi_+\left|\grad u_k\right|^{2}\,\mathrm{dvol}-\frac{1}{2}\int_{\Omega_+}u_k^{2}\Delta_M \varphi_+ \,\mathrm{dvol}.
				\end{align*}
			Hence 
			\[\int_{\Omega}\left|\grad u_k\right|^{2}\,\mathrm{dvol}\le\left( k^2+C\varepsilon^{-2}\right)\int_{\Omega_+}u_k^{2}\,\mathrm{dvol}.\]
			On the other hand, choosing a similar function $\varphi_-\in C_0^\infty(\Omega)$ such that $\varphi_-=1$ on $\Omega_-$, we conclude that
			\[\int_{\Omega}|\grad u_k|^2\,\mathrm{dvol}\ge k^2\int_{\Omega_-}u_k^2-C\varepsilon^{-2}\int_{\Omega}u_k^2\,\mathrm{dvol}.\qedhere\]
		\end{proof}
		
	Finally, 	we go back to the inequality \eqref{rev}, and apply the  Caccioppoli inequality. Rename $R_1=R+\varepsilon$ and $r_1=r-\varepsilon$. This gives the following estimate of the $L^2$-norm of a solution to the Helmholtz equation by its $L^2$ norm on an annulus
		\begin{align*}
			%\label{last_equation}
k^2	\int_{B_{r_1}}u_k^2 \,\mathrm{dvol}&\le	\int_{B_r}|\grad u_k|^2\,{\mathrm{dvol}}+C\varepsilon^{-2}\int_{B_{r}}u_k^2\,\mathrm{dvol}\\&	 \le (k^2+C\varepsilon^{-2})\int_{B_{R_1}\setminus B_{r_1}} u_k^2 \, \mathrm{dvol}+ C\varepsilon^{-2}\int_{B_{r_1}}u_k^2\,\mathrm{dvol},
		\end{align*}	
		for any $r_1<R_1$. Then for $k>C\varepsilon^{-1}$ the inequality \eqref{eq:revN} follows. For $k<C\varepsilon^{-1}$, we use \eqref{rev} and the Caccioppoli inequality again,  to see that
		\begin{equation}\label{eq:inverse_ball}
		\int_{B_r}|u_k|^2\,\mathrm{dvol}\le C(r,R)(k^2+C\varepsilon^2)\int_{B_{R_1}\setminus B_{r_1}}|u_k|^2\,\mathrm{dvol}.
		\end{equation}
		Thus, since $k<C\varepsilon^{-1}$, the inequality \eqref{eq:revN} follows also for that case. This conclude the proof of Theorem \ref{th:last}.
		%The constant 
		%$C$ depends only on $(M, \mathbf{g})$ and the radii $R_1$ and $r_1$. 

		Additionally, one might wonder if Theorem \ref{th:last} could work for large radii? The answer is in general no. As an example consider the circle and the eigenfunctions $\mathrm{Re}(z^n)$ restricted to the sphere. This sequence of eigenvalues concentrate on the equator. Hence if you center the balls at the north-pole of the sphere, and the annulus on the southern hemisphere not intersection the equator the conclusion of Theorem \ref{th:last} cannot work for the family $\mathrm{Re}(z^n)$.

\subsection{Comparison With Other Results}
	As briefly mentioned in the introduction, \cite{ALM16} contains a result for the Schr\"odinger equation with Dirichlet boundary conditions on the disk
	\begin{equation}\label{eq:dirichlet}
		\begin{cases}
			(\Delta  +V(x))u(x)=0& \text{in }\mathbb{D}\\ 
			u=0 & \text{on }\partial\mathbb{D}
	\end{cases}
	\end{equation}
	with smooth potential $V\in C^\infty(\overline{\mathbb{D}})$.  Assume that $\Omega$ is an open set in $\overline{\mathbb{D}}$ intersecting the boundary, i.e.\ $\Omega\cap \partial \mathbb{D}\neq \emptyset$.
	The following inequality holds 
	\[\int_\mathbb{D}u^2(x)\,dx\le C(V,\Omega)\int_\Omega u^2(x)\, dx,\]
	for $C(V,\Omega)$ depending only on $V$ and $\Omega$.
	When applied to $V=k^2$ this result and Theorem \ref{th:last} is very similar except that the choice of domain is more flexible.

	The article \cite{ALM16} also contains a result involving the boundary and the normal derivative. Let $\Gamma\subset \partial\mathbb{D}$ be a non-empty open set and let $u$ be the solution to \eqref{eq:dirichlet}. Then the result states that
	\[\int_\mathbb{D}u^2(x)+|\grad u(x)|^2\, dx\le C(V,\Gamma)\int_{\Gamma} u_n^2(x)\, dS.\]

\appendix
	\section{The first positive zero of the Bessel function}\label{A}
	
		Let $l$ be a non-negative half-integer, and let $\Gamma$ denotes the gamma function. The Bessel function $J_l$ is a solution to the second order ODE
		\begin{equation}
			\label{ODE_bessel}
			\rho^{2}J_l''\left( \rho \right)+\rho J_l'\left( \rho \right)+\left( \rho^{2}-l^{2} \right)J_l\left( \rho \right)=0,
		\end{equation}
		which is bounded at the origin and normalized by the condition
		\begin{equation*}
		\lim_{\rho\to 0}\rho^{-l}J_l(\rho)=2^{-l}\Gamma(l+1)^{-1}.
		\end{equation*}
		For alternative definitions and many useful asymptotic formulas for the Bessel functions we refer the reader to \cite{Ol74}. 
		
		It is well known that the first positive zero of $J_l$, usually denoted by $j_l$, satisfies $j_l\asymp l+cl^{1/3}$ as $l\to \infty$. We already explained in the proof of Lemma~\ref{cor:2} that $j_l>l$ and we give a simple proof of the inequality $j_l\le l+cl^{1/3}$.
		This will be done by comparing the Bessel equation to the following equation
		\begin{equation}\label{eq:com}
		\rho^2y_l''(\rho)+\rho y_l'(\rho)+(a_l^2\rho^2-1/4)y_l(\rho)=0
		\end{equation}
		on the interval $\rho\in [l+l^{1/3},+\infty)$.

		Suppose that $a_l<1$ satisfies 
		\begin{equation}\label{eq:a}
			(l+l^{1/3})^2(1-a_l^2)\ge l^2-1/4.
		\end{equation}
		Then the Sturm-Picone comparison theorem, see Theorem \ref{T:sturm-comparison}, implies that between any two zeros of $y_l$ there is a zero of $J_l$. It is easy to check that $y_l(\rho)=\rho^{-1/2}\cos(a_l\rho)$ solves \eqref{eq:com} and has roots at $(\pi/2+k\pi)/a_l$ for $k\in \mathbb{Z}$. We choose $a_l=l^{-1/3}$. Then \eqref{eq:a} holds for $l$ large enough. Hence $J_l$ has a root on the interval $[l+l^{1/3}, l+l^{1/3}+\pi l^{1/3}]$ and $j_l\le l+(\pi+1)l^{1/3}$.
					 
		\section{Comparison theorems for Sturm-Liouville equations}
		Classical Sturm-Liouville theory is concerned with second order differential equations on the form 
		\begin{equation*}
			(p(x)y'(x))'+q(x)y(x)=0 \ \text{on }[a,b].
		\end{equation*}
                Special cases of Sturm-Liouville equations are the radial solutions to the Helmholtz equation, see \eqref{radial}. To estimate solutions to  these  radial equations in Section \ref{sec:2.2}, we  compare them to some more simple Sturm-Liouville equations. To do this we use the following  classical theorems:
\begin{theorem}[Sturm-Picone Comparison Theorem, {\cite[Theorem B]{Hi05}}]\label{T:sturm-comparison}
	Let $y_1$ and $y_2$ be non-zero solutions to 
		\begin{align*}
			(p_1(x)y_1'(x))'+q_1(x)y_1(x)&=0, \\
			(p_2(x)y_2'(x))'+q_2(x)y_2(x)&=0,
		\end{align*}
		on the interval $[a,b]$.
		Assume that $0<p_2\le p_1$ and $q_1\le q_2$, and let $z_1$ and $z_2$ be two consecutive zeros of $y_1$. Then either $y_2$ has a zero in the interval $(z_1,z_2)$, or $y_1=y_2$.
\end{theorem}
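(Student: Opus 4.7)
The plan is to apply the classical Picone identity, which is the standard device behind this type of comparison theorem. On any subinterval where $y_2\neq 0$, define
\[F:=\frac{y_1}{y_2}\bigl(p_1\, y_1'\, y_2-p_2\, y_1\, y_2'\bigr)=p_1 y_1 y_1'-p_2\frac{y_1^2 y_2'}{y_2}.\]
A direct differentiation, using $(p_1 y_1')'=-q_1 y_1$ and $(p_2 y_2')'=-q_2 y_2$ from the two Sturm--Liouville equations to eliminate the second-derivative terms, and then completing the square in the cross term $p_2 y_1 y_1' y_2'/y_2$, yields the Picone identity
\[F'=(q_2-q_1)\,y_1^{\,2}+(p_1-p_2)\,(y_1')^2+p_2\!\left(y_1'-\frac{y_1 y_2'}{y_2}\right)^{\!2}.\]
All three terms on the right-hand side are pointwise nonnegative under the hypotheses $0<p_2\leq p_1$ and $q_1\leq q_2$.

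I would then argue by contradiction. Assume $y_2$ has no zero on $(z_1,z_2)$; after replacing $y_2$ by $-y_2$ if needed, $y_2>0$ on that open interval. In the generic subcase $y_2(z_1),y_2(z_2)\neq 0$, the function $F$ is $C^1$ on $[z_1,z_2]$ and $y_1(z_1)=y_1(z_2)=0$ forces $F(z_1)=F(z_2)=0$. Integrating the identity over $[z_1,z_2]$ yields $0=\int_{z_1}^{z_2}F'\,dx\geq 0$, so each nonnegative summand must vanish identically. The vanishing of the square term gives $(y_1/y_2)'\equiv 0$, hence $y_1=c\,y_2$ on $(z_1,z_2)$; evaluating at $z_1$ forces $c=0$, contradicting that $y_1$ is nontrivial. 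The alternative ``$y_1=y_2$'' in the statement corresponds precisely to the borderline case in which the Picone identity is saturated with $c\neq 0$, which forces $p_1=p_2$, $q_1=q_2$ on the relevant subinterval and $y_1$ proportional to $y_2$.

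The main obstacle is the boundary subcase where $y_2(z_1)=0$ or $y_2(z_2)=0$, since $F$ is then formally $0/0$ at an endpoint. I would handle this by observing that zeros of nontrivial Sturm--Liouville solutions are simple (a double zero would force $y\equiv 0$ by uniqueness of the Cauchy problem for $(p y')'+qy=0$), so near a common zero $z$ of $y_1$ and $y_2$ the ratio $y_1^{\,2}/y_2$ vanishes to first order and $F$ extends continuously to $z$ with value zero. Equivalently, one can run the integration argument on $[z_1+\varepsilon,z_2-\varepsilon]$ and pass to the limit $\varepsilon\to 0$, using the constant sign of $y_2$ on the open interval to keep $F$ well defined throughout. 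In either treatment the boundary term vanishes and the rest of the proof is unchanged.
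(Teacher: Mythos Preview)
The paper does not prove this theorem at all; it merely states it with a citation to \cite{Hi05} and then moves on to the next result. There is therefore no proof in the paper to compare against.

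Your argument via the Picone identity is the standard route and is correct. The derivation of
\[
F'=(q_2-q_1)\,y_1^{\,2}+(p_1-p_2)\,(y_1')^{2}+p_2\Bigl(y_1'-\tfrac{y_1 y_2'}{y_2}\Bigr)^{2}
\]
is right, the boundary treatment (simple zeros, hence $y_1^{2}/y_2\to 0$ at a common zero) is the correct way to deal with the $0/0$ endpoint case, and the conclusion follows. One small point of presentation: your case split could be cleaner. In the ``generic'' case $y_2(z_1)\neq 0$ or $y_2(z_2)\neq 0$ you correctly obtain $c=0$ and a contradiction. The equality alternative in the statement can only occur when $y_2$ vanishes at \emph{both} endpoints $z_1$ and $z_2$; then the vanishing of all three summands forces $q_1\equiv q_2$ (since $y_1\neq 0$ on $(z_1,z_2)$), $p_1\equiv p_2$ (by continuity, as $y_1'$ has only isolated zeros), and $y_1=c\,y_2$ with $c\neq 0$. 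Strictly speaking this yields proportionality rather than literal equality, but that is a wrinkle in the statement as quoted, not in your proof.
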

\begin{theorem}[Sturm Comparison Theorem, {\cite[Chapter 13.7]{Du12}}]\label{T:sturm-comp}
	Let $y_1>0$ on $(a,c)$ and $y_2$ be non-zero solutions to 
		\begin{align*}
			(p(x)y_1'(x))'+q_1(x)y_1(x)&=0, \\
			(p(x)y_2'(x))'+q_2(x)y_2(x)&=0,
		\end{align*}
		on the interval $(a,c)\subset[a,b]$.
		Assume that $p>0$ and $q_2\le q_1$ on $[a,b]$. Furthermore, assume that 
		\begin{align*}
			y_1(a)=y_2(a)\ge 0\text{ and } y_1'(a)=y_2'(a)\ge 0.
		\end{align*}
		Then $y_1(x)<y_2(x)$ for all $x\in (a,c)$.
\end{theorem}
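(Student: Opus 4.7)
My plan is to argue via a Wronskian-type comparison. I would introduce
\[ W(x) = p(x)\bigl( y_1'(x)\, y_2(x) - y_1(x)\, y_2'(x) \bigr). \]
Using the two ODEs in the form $(p y_i')' = -q_i y_i$, a short computation gives
\[ W'(x) = (py_1')' y_2 - (py_2')' y_1 = (q_2 - q_1)\, y_1(x)\, y_2(x), \]
and the matching initial data force $W(a) = 0$. Thus, provided $y_2 > 0$ on $(a,c)$, the hypothesis $q_2 \le q_1$ immediately yields $W' \le 0$, hence $W \le 0$ on $(a,c)$.

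The sign of $W$ translates into the pointwise comparison via the ratio $y_2/y_1$, which is well defined on $(a,c)$ since $y_1 > 0$:
\[ \frac{d}{dx}\!\left(\frac{y_2}{y_1}\right) = \frac{y_1 y_2' - y_2 y_1'}{y_1^2} = -\frac{W(x)}{p(x)\, y_1(x)^2} \ge 0. \]
So $y_2/y_1$ is non-decreasing on any subinterval of $(a,c)$ on which $y_2$ is positive. The limit at $a^{+}$ is $1$: directly when $y_1(a) > 0$, or by L'Hôpital when $y_1(a) = 0$ (in which case $y_1'(a) > 0$, else $y_1 \equiv 0$ by uniqueness, contradicting $y_1 > 0$). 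Consequently $y_2 \ge y_1$ on any subinterval where positivity of $y_2$ is known.

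To upgrade positivity of $y_2$ to the whole of $(a,c)$, I would run a bootstrap. Let $x_0$ be the supremum of $x \in (a, c]$ such that $y_2 > 0$ on $(a, x)$; the initial conditions ensure $x_0 > a$. On $(a, x_0)$ the previous paragraph applies and gives $y_2 \ge y_1 > 0$; if $x_0 < c$, continuity at $x_0$ would give $y_2(x_0) \ge y_1(x_0) > 0$, contradicting $y_2(x_0) = 0$. So $x_0 = c$, establishing $y_2 \ge y_1$ on all of $(a, c)$. Strict inequality at any interior point follows, because equality at some $x_\ast$ combined with monotonicity of the ratio and ratio $1$ at $a^{+}$ would force $y_2 \equiv y_1$ on $(a, x_\ast)$; uniqueness for the initial value problem then propagates $y_1 \equiv y_2$, the degenerate case excluded by the strict conclusion (it would require $q_1 \equiv q_2$ wherever $y_1 \ne 0$). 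The main obstacle is the apparent circularity between the bound $W \le 0$ and the positivity of $y_2$; taking the maximal interval of positivity and closing by continuity is the key step that breaks this loop.
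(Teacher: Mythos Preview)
The paper does not actually prove this statement: Theorem~\ref{T:sturm-comp} is quoted from \cite[Chapter 13.7]{Du12} without proof, so there is nothing to compare your argument against. Your Wronskian approach is the standard one, and it correctly establishes the weak inequality $y_1\le y_2$ on $(a,c)$: the computation of $W'$, the initial condition $W(a)=0$, the monotonicity of $y_2/y_1$, the limit $1$ at $a^+$, and the bootstrap to extend positivity of $y_2$ are all sound.

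Where your argument becomes thin is the upgrade to strict inequality. As you yourself note, if $q_1\equiv q_2$ then the matching initial data force $y_1\equiv y_2$ by uniqueness, so the strict conclusion $y_1(x)<y_2(x)$ \emph{cannot} hold without some further hypothesis (e.g.\ $q_2<q_1$ on a set of positive measure, or $y_1\not\equiv y_2$). The theorem as stated in the paper is therefore slightly imprecise, and your final paragraph does not resolve this: saying the degenerate case is ``excluded by the strict conclusion'' is circular, since the strict conclusion is exactly what you are trying to prove. For the applications in the paper (Lemmas~\ref{cor:2} and~\ref{l:comp1}) the coefficients satisfy a strict inequality on the relevant interval, so the weak conclusion $y_1\le y_2$ that you do prove rigorously is all that is actually needed.
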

It is also important for us to estimate the maximum of some solutions to the Helmholtz equation. To limit the search, we use the following theorem:
\begin{theorem}[Sonin-P\'olya Oscillation Theorem, {\cite[Chapter 13.7]{Du12}}]\label{T:Sonin-Polya}
	Let $y$ be a solution of the differential equation \[(p(x)y'(x))' + q(x)y(x) = 0,\] where $p$ and $q$ are continuously differentiable functions on $[a, b]$. Suppose that $p > 0$, $q \neq 0$ and $(pq)' > 0$ on $(a, b)$. Then the successive local maximums of $|y(x)|$ form a decreasing sequence.
\end{theorem}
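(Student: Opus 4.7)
The plan is to construct an auxiliary ``energy'' functional $F(x)$ that (i) equals $y(x)^2$ at every critical point of $y$ and (ii) is non-increasing on $(a,b)$. Once such an $F$ is in hand, the conclusion is immediate: a local maximum of $|y|$ can occur only at a point where $y'=0$ (at an isolated zero of $y$ with $y'\neq 0$ the function $|y|$ has a local minimum, and by uniqueness for the second-order linear ODE a non-trivial solution cannot vanish on an entire sub-interval), so the monotonicity of $F$ restricted to these critical points gives the monotonicity of $|y(x_i^*)|$.

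The natural candidate, modelled on the conserved energy $y^2 + (y')^2/\omega^2$ for the constant-coefficient equation $y'' + \omega^2 y = 0$, is
\[ F(x) = y(x)^2 + \frac{(p(x) y'(x))^2}{p(x) q(x)}. \]
The first task is to differentiate $F$ and substitute $(py')' = -qy$ from the equation. I expect the cross term $2yy'$ generated by the first summand to cancel exactly against the corresponding contribution from the numerator of the second summand, leaving only the piece from differentiating the denominator:
\[ F'(x) = -\frac{(p(x) y'(x))^2 \, (p(x) q(x))'}{(p(x) q(x))^2} \le 0, \]
where the sign follows from the hypothesis $(pq)' > 0$ together with $(pq)^2 > 0$. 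Note that this argument is insensitive to whether $pq$ is positive or negative; only the monotonicity of $pq$ matters, and $F(x_i^*) = y(x_i^*)^2 \geq 0$ at critical points regardless.

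Given the bound on $F'$, the proof concludes in one line: at consecutive critical points $x_i^* < x_{i+1}^*$ of $y$ one has $y'(x_i^*) = y'(x_{i+1}^*) = 0$, hence $F(x_i^*) = y(x_i^*)^2$ and $F(x_{i+1}^*) = y(x_{i+1}^*)^2$; the monotonicity of $F$ yields $|y(x_i^*)| \geq |y(x_{i+1}^*)|$. The only genuinely non-obvious step---and the main ``obstacle''---is guessing the precise form of the functional $F$; after that the argument is a routine calculation together with the elementary observation that local maxima of $|y|$ coincide with zeros of $y'$.
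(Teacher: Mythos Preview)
The paper does not supply its own proof of this statement; it simply records the theorem in the appendix with a citation to a textbook. Your energy-functional argument is correct and is in fact the classical proof: with $F(x)=y(x)^2+\dfrac{(p(x)y'(x))^2}{p(x)q(x)}$ one computes
\[
F'(x)=2yy'+\frac{2(py')(py')'}{pq}-\frac{(py')^2(pq)'}{(pq)^2}
=2yy'-2yy'-\frac{(py')^2(pq)'}{(pq)^2}\le 0,
\]
using $(py')'=-qy$, $(pq)'>0$, and $(pq)^2>0$. Since $p>0$ and $q$ is continuous and nonvanishing, $pq$ has a fixed sign on $(a,b)$, so $F$ is well defined everywhere; as you observe, the sign of $pq$ is irrelevant to the monotonicity of $F$. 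Your remark that local maxima of $|y|$ occur only where $y'=0$ (isolated zeros of $y$ being local minima of $|y|$, and non-isolated zeros being ruled out by uniqueness) completes the argument.
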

\bibliographystyle{abbrv}
\bibliography{three}
\end{document}